\newcommand{\mylabel}[2]{#2\def\@currentlabel{#2}\label{#1}}
\newcommand{\F}{\mathcal F}
\newcommand{\AW}{\mathcal{AW}}
\newcommand{\Law}{\mathscr L}
\newcommand{\FP}{\mathrm{FP}}
\newcommand{\FFP}{\textbf{FP}}
\newcommand{\pp}{\mathrm{pp}}
\newcommand{\fp}[1]{{\bf #1}}
\newcommand{\W}{\mathcal W}
\newcommand{\N}{\mathbb N}
\newcommand{\R}{\mathbb R}
\newcommand{\X}{\mathcal X}
\newcommand{\Pc}{\mathcal P}
\newcommand{\proj}{\mathrm{pj}}
\newcommand{\cpl}{\mathrm{Cpl}}
\newcommand{\cpla}{\cpl_{\mathrm{c}}}
\newcommand{\cplba}{\cpl_{\mathrm{bc}}}
\newcommand{\CW}{\mathcal{CW}}
\newcommand{\SCW}{\mathcal{SCW}}
\def\P{{\mathbb P}}
\def\E{{\mathbb E}}
\def\Q{{\mathbb Q}}
\def\R{{\mathbb R}}
\newtheorem{theorem}{Theorem}
\newtheorem{corollary}[theorem]{Corollary}
\newtheorem{definition}[theorem]{Definition}
\newtheorem{lemma}[theorem]{Lemma}
\newtheorem{proposition}[theorem]{Proposition}
\theoremstyle{remark}
\newtheorem{remark}[theorem]{Remark}
\begin{document}

\begin{abstract}
	The adapted weak topology is an extension of the weak topology for stochastic processes designed to adequately capture properties of underlying filtrations.
	With the recent work of Bart--Beiglb\"ock--P.\ \cite{BaBePa21} as starting point, the purpose of this note is to recover with topological arguments the intriguing result by Backhoff--Bartl--Beiglb\"ock--Eder \cite{BaBaBeEd19b} that all adapted topologies in discrete time coincide.
	We also derive new characterizations of this topology including descriptions of its trace on the sets of Markov processes and processes equipped with their natural filtration.
	To emphasize the generality of the argument, we also describe the classical weak topology for measures on $\mathbb R^d$ by a weak Wasserstein metric based on the theory of weak optimal transport initiated by Gozlan--Roberto--Samson--Tetali \cite{GoRoSaTe17}.
	\medskip

	\noindent\emph{keywords:} adapted weak topology; stochastic processes
\end{abstract}

\author{Gudmund~Pammer}
\thanks{ETH Zurich, Switzerland, \href{mailto:gudmund.pammer@math.ethz.ch}{gudmund.pammer@math.ethz.ch}}
\title{A note on the adapted weak topology in discrete time}

\maketitle

\section{Introduction}
\label{sec:introduction}

An essential difference in the study of random variables and stochastic processes is that the latter comes in conjuction with filtrations that are designed to model the flow of available information:
Let us consider a path space $\mathcal X := \prod_{t = 1}^N \mathcal X_t$ equipped with the product topology where $(\mathcal X_t, d_{\mathcal X_t})$ are Polish metric spaces and $N \in \mathbb N$ denotes the number of time steps.
We write $\mathcal P(\mathcal X)$ for the set of laws of stochastic processes, i.e., Borel probability measures on $\mathcal X$.
Canonically, we identify the law $\mathbb P \in \Pc(\X)$ with the process 
\begin{equation}
	\label{eq:def_plain_process}
	\big( \X, \sigma(X_{1:t})_{t = 1}^N, \sigma(X), \P, X \big),
\end{equation}
where $X = X_{1:N}$ is the coordinate process on $\X$, $X_{1:t}$ denotes the projection from $\X \to \prod_{t = 1}^t \X_s =: \X_{1:t}$, and $\sigma(X_{1:t})$ the $\sigma$-algebra generated by $X_{1:t}$.
For $\P, \Q \in \mathcal P_p(\X)$, that are probabilities in $\Pc(\X)$ with finite $p$-th moment, $p \in [1,\infty)$, the $p$-Wasserstein distance $\W_p$ is given by
\begin{equation}
	\label{eq:def_Wasserstein_distance}
	\W_p^p(\P, \Q) := \inf_{\pi \in \cpl(\P,\Q)} \E_\pi \big[ d_\X^p(X, Y) \big],
\end{equation}
where $\cpl(\P,\Q)$ denotes the probabilities on $\X \times \X$ with marginals $\P$ and $\Q$, and $d_\X^p(x,y) := \sum_{t = 1}^N d_{\X_t}^p(x_t,y_t)$.
We equip $\Pc_p(\X)$ with the topology induced by $\W_p$ and note that if $d_\X$ is bounded, $\W_p$ metrizes the weak topology on $\Pc(\X)$.

The starting point for the study of \emph{adapted} topologies poses the fact that probabilistic operations and optimization problems that crucially depend on filtrations, such as the Doob decomposition, the Snell envelope, optimal stopping, utility maximization, and stochastic programming, are typically not continuous w.r.t.\ weak topologies.
These shortcomings are acknowledged by several authors from different communities, see e.g.\ \cite{Al81, HoKe84, PfPi12, BaBaBeEd19a,BaBePa21} for more details.
The purpose of this note is to recover and strengthen the main result of Backhoff et al.\ \cite{BaBaBeEd19b} that all adapted topologies on $\Pc(\X)$ coincide.
In comparison to the original proof, our argument is more conceptional: at its core lies the elementary fact that comparable compact Hausdorff topologies agree.

\subsection{Stochastic processes and the adapted weak topology}

Subsequently, we want to consider topologies that incorporate the flow of information encoded in filtrations, for processes on general filtered probability spaces.
Therefore, we follow the approach of \cite{BaBePa21} by introducing the notion of a \emph{filtered process}.
\begin{definition}[Filtered process]
	A filtered process $\fp X$ with paths in $\X$ is a $5$-tuplet
	\begin{equation}
		\label{eq:def_filtered_process}
		\big( \Omega^\fp X, (\F_t^\fp X)_{t = 1}^N, \F^\fp X, \P^\fp X, X \big),
	\end{equation}
	consisting of a complete filtered probability space $(\Omega^\fp X, (\F_t^\fp X)_{t = 1}^N, \F^\fp X, \P^\fp X)$ and an $(\F_t^\fp X)_{t = 1}^N$-adapted stochastic process $X$ with paths in $\X$.
	We write $\FP$ for the class of all filtered processes with paths in $\X$, and $\FP_p$ for the subclass of filtered processes that finitely integrate $d_\X^p(\hat x, X)$ for some $\hat x \in \X$.
\end{definition}

Although, a-priori $\FP$ is a proper class (that contains a lot of redundancy), in the following we will consider equivalence classes $[\fp X]$ of filtered processes in the sense of Hoover-Keisler \cite{HoKe84} such that the corresponding factor space $\FFP$ becomes a set, see for example \cite{BaBePa19}.
This factorization can be seen similarly as in classical $L^p$-theory where one considers equivalence classes modulo almost-sure equivalence in order to obtain a Banach space.
This equivalence relation can be characterized by an \emph{adapted} version of the Wasserstein distance, c.f.\ \cite[Theorem 1.5]{BaBePa21}, the \emph{adapted Wasserstein distance} $\AW_p$ which will be introduced in detail in Section \refeq{ssec:adapted_topologies} below: for $\fp X, \fp Y \in \FP_p$ we have
\[ \fp Y \in [\fp X] \iff \AW_p(\fp X, \fp Y) = 0. \]

Henceforth, we consider the factor space $\FFP$ and remark that equivalent processes share the same probabilistic properties, e.g.\ being adapted, having the same Doob decomposition and Snell-envelope, \dots.
Moreover, we write $\FFP_p$ for those elements $\fp X \in \FFP$ with $\E_{\P^\fp X}[d_\X^p(\hat x, X)] < \infty$ for some $\hat x \in \X$.

The topology induced by the adapted Wasserstein distance is denoted by $\tau_\AW$ and called the \emph{adapted weak topology}.
When equipping $\FFP$ with the adapted weak topology, we obtain a space rich of topological and geometric properties, see \cite{BaBePa21}.
Importantly, we note that as a consequence of the adapted block approximation introduced in \cite{BaBePa21} the values of $\AW_p(\fp X, \fp Y)$ (and also $\CW_p(\fp X, \fp Y)$ which will be introduced down below) is independent of the particular choice of representatives.
Similarly, we can equip $\FFP_p$ with $p$-th Wasserstein topology by letting
\[
	\W_p(\fp X, \fp Y) := \W_p(\Law(X),\Law(Y)),
\]
and remark that $\W_p$ is not point seperating on $\FFP_p$.
Processes can have the same law, but very different information structure, see for instance \cite[Figure 1]{BaBaBeEd19a}.
An important feature of $\AW_p$ is the following Prokhorov-type result which will be applied at several occasions in the proofs:

\begin{theorem}[Theorem 1.7 of \cite{BaBePa21}]
	\label{thm:precompact}
	A set $M \subset \FFP_p$ is $(\FFP_p,\AW_p)$-precompact if and only if $M$ is $(\FFP_p,\W_p)$-precompact, that is $\{ \Law(X) \colon \fp X \in M \} \subset \Pc_p(\X)$ is precompact.
\end{theorem}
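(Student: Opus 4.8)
The plan is to prove the two implications separately. The ``only if'' direction is soft and rests on the fact that passing from a filtered process to its law is $1$-Lipschitz. The ``if'' direction I would reduce, via the adapted block approximation of \cite{BaBePa21}, to a diagonal extraction; this is where the real work sits.

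For necessity: every bicausal coupling of two filtered processes is, when projected onto the path coordinates, a coupling of their laws, whence $\W_p(\Law(X),\Law(Y))\le\AW_p(\fp X,\fp Y)$ and $\fp X\mapsto\Law(X)$ is $1$-Lipschitz, hence continuous, from $(\FFP_p,\AW_p)$ to $(\Pc_p(\X),\W_p)$. A continuous image of a precompact set is precompact, so $(\FFP_p,\AW_p)$-precompactness of $M$ forces $\{\Law(X)\colon\fp X\in M\}$ to be precompact in $\Pc_p(\X)$, i.e.\ $M$ is $(\FFP_p,\W_p)$-precompact.

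For sufficiency, suppose $K:=\{\Law(X)\colon\fp X\in M\}$ is precompact in $\Pc_p(\X)$; equivalently, $K$ is tight and $\sup_{\P\in K}\E_\P\big[d_\X^p(\hat x,X)\,\mathbf 1_{\{d_\X(\hat x,X)>R\}}\big]\to 0$ as $R\to\infty$. Since $(\FFP_p,\AW_p)$ is complete (see \cite{BaBePa21}), it suffices to extract from an arbitrary sequence $(\fp X^n)_n\subset M$ an $\AW_p$-Cauchy subsequence. I would invoke the adapted block approximation of \cite{BaBePa21}, which assigns to each $\fp X\in\FFP_p$ and each $m\in\N$ a filtered process $B_m(\fp X)\in\FFP_p$, supported on one fixed finite subset $F_m\subset\X$ and equipped with its natural filtration, and I would want: (i) the filtered processes with paths in $F_m$ and natural filtration -- being parametrised by transition probabilities along a finite tree, i.e.\ by a compact finite-dimensional set on which $\AW_p$ is continuous -- form a compact subset of $(\FFP_p,\AW_p)$, so $\{B_m(\fp X)\colon\fp X\in M\}$ is precompact; (ii) $\AW_p(\fp X,B_m(\fp X))\le\delta_m$ for all $\fp X\in M$, where $\delta_m\downarrow 0$ at a rate governed by the tightness and uniform $p$-integrability of $K$. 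Granting (i) and (ii), a diagonal argument over $m$ yields a subsequence $(\fp X^{n_k})_k$ along which $B_m(\fp X^{n_k})$ is $\AW_p$-convergent in $k$ for every $m$, and then
\[
\AW_p(\fp X^{n_k},\fp X^{n_l}) \ \le\ 2\delta_m+\AW_p\big(B_m(\fp X^{n_k}),B_m(\fp X^{n_l})\big)
\]
by the triangle inequality; letting $k,l\to\infty$ and then $m\to\infty$ shows $(\fp X^{n_k})_k$ is $\AW_p$-Cauchy, hence convergent in $\FFP_p$. As the sequence was arbitrary, $M$ is $(\FFP_p,\AW_p)$-precompact.

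The main obstacle is property (ii), the sole quantitative ingredient. One must bound $\AW_p(\fp X,B_m(\fp X))$ uniformly over $M$ by quantities depending only on $\Law(X)$: the block approximation displaces a path by at most the grid mesh on a fixed relatively compact region and into a single ``tail'' cell outside it, so, using the evident bicausal coupling, the cost is controlled by the $L^p$-size of the mesh plus $\big(\E_\P[d_\X^p(\hat x,X)\mathbf 1_{\{d_\X(\hat x,X)>R\}}]\big)^{1/p}$ with $R$ tied to that region; choosing the region to absorb the tightness and uniform $p$-integrability of $K$ makes both terms small uniformly in $\fp X\in M$, which is exactly where the precompactness hypothesis is consumed. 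Property (i) is essentially soft, but still relies on $\AW_p$ restricted to processes on a fixed finite path space inducing a compact topology; note that, unlike arguments that establish continuity of $B_m$ on all of $\Pc_p(\X)$, this route needs no ``continuity-set'' choice of grid. With (i) and (ii) in place, the diagonal argument is routine.
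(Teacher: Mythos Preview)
The paper does not contain a proof of this statement: Theorem~\ref{thm:precompact} is quoted verbatim as ``Theorem~1.7 of \cite{BaBePa21}'' and used as a black box throughout, so there is no proof here to compare your proposal against. Your outline is a plausible reconstruction of the argument in \cite{BaBePa21}---the necessity direction is exactly the $1$-Lipschitz observation recorded in the present paper as \eqref{eq:distances are ordered}, and the sufficiency direction via the adapted block approximation, uniform approximation controlled by tightness and $p$-uniform integrability of the laws, compactness of the finite-state approximants, and a diagonal extraction is indeed the mechanism \cite{BaBePa21} develops. Whether your sketch matches the original in detail (in particular, whether completeness of $(\FFP_p,\AW_p)$ is invoked or a limit is constructed directly) would have to be checked against \cite{BaBePa21} itself; the present note gives no further information.
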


To emphasize the significance of Theorem \refeq{thm:precompact} and to give the idea behind the main results, we formulate the following immediate corollary:

\begin{corollary}
	\label{cor:equivalent_topologies}
	Let $d \colon \FFP_p \times \FFP_p \to \R^+$ be a metric on $\FFP_p$ such that
	\begin{equation}
		\label{eq:d_comparable}
		\W_p(\fp X, \fp Y) \le d(\fp X, \fp Y) \le \AW_p(\fp X, \fp Y).
	\end{equation}
	Then $d$ metrizes the adapted weak topology $\tau_\AW$.
\end{corollary}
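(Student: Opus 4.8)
The plan is to exploit the elementary topological fact alluded to in the introduction: if two comparable topologies are both compact Hausdorff, they coincide. Here, however, the spaces are not compact, so I would argue locally on precompact sets and then glue. Write $\tau_d$ for the topology induced by $d$, and recall that by \eqref{eq:d_comparable} we have, on the level of topologies on $\FFP_p$,
\[
	\tau_{\W_p} \subseteq \tau_d \subseteq \tau_{\AW_p}.
\]
So it suffices to show $\tau_{\AW_p} \subseteq \tau_d$, i.e.\ that $d$-convergence implies $\AW_p$-convergence; the reverse inclusion $\tau_d \subseteq \tau_{\AW_p}$ is immediate from the right-hand inequality, and point-separation of $d$ guarantees Hausdorffness.

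First I would establish sequential characterization of convergence, which suffices since all three metrics are genuine metrics. Suppose $d(\fp X^n, \fp X) \to 0$. From the left inequality in \eqref{eq:d_comparable}, $\W_p(\fp X^n, \fp X) \to 0$, hence $\{\Law(X^n) : n \in \N\} \cup \{\Law(X)\}$ is $\W_p$-precompact in $\Pc_p(\X)$. By Theorem~\ref{thm:precompact}, the set $M := \{\fp X^n : n\in\N\} \cup \{\fp X\}$ is $(\FFP_p, \AW_p)$-precompact. Therefore every subsequence of $(\fp X^n)$ has a further subsequence $(\fp X^{n_k})$ that $\AW_p$-converges to some limit $\fp Z \in \FFP_p$. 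Along that subsequence we also have $d(\fp X^{n_k}, \fp Z) \to 0$, because $d \le \AW_p$; but $d(\fp X^{n_k}, \fp X) \to 0$ as well, so by the triangle inequality and point-separation of $d$ we get $\fp Z = \fp X$ in $\FFP_p$. Thus every subsequence of $(\fp X^n)$ has a further subsequence $\AW_p$-converging to $\fp X$, which forces $\AW_p(\fp X^n, \fp X) \to 0$ by the standard subsequence criterion. Conversely, $\AW_p(\fp X^n, \fp X) \to 0$ trivially gives $d(\fp X^n, \fp X) \to 0$. Hence $\tau_d = \tau_{\AW_p}$.

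The only genuine subtlety is verifying that the $\AW_p$-limit $\fp Z$ extracted from precompactness is legitimately an element of $\FFP_p$ and that $d$ applies to it — this is exactly where we need $d$ to be defined on all of $\FFP_p$ (as hypothesized) and where we use that $(\FFP_p, \AW_p)$ is not merely precompact on $M$ but that the limit stays in $\FFP_p$; this follows since $\AW_p$-precompactness of $M$ means relative compactness in the completion, and $(\FFP_p,\AW_p)$ is complete by \cite{BaBePa21}, so the limit lies in $\FFP_p$. The rest is the routine subsequence argument. I expect no further obstacle; the entire content is packaged into Theorem~\ref{thm:precompact}, which upgrades $\W_p$-precompactness to $\AW_p$-precompactness and thereby makes the sandwiched metric $d$ behave like $\AW_p$.
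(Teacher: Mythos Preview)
Your proof is correct and follows essentially the same approach as the paper: both use the sandwich \eqref{eq:d_comparable} together with Theorem~\ref{thm:precompact} to upgrade $\W_p$-precompactness of a $d$-convergent sequence to $\AW_p$-precompactness, extract an $\AW_p$-convergent subsequence, and identify its limit with $\fp X$ via the triangle inequality and point-separation of $d$. Your version is slightly more careful in spelling out the subsequence-of-subsequence criterion and the completeness of $(\FFP_p,\AW_p)$, but the argument is the same.
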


Indeed, by \eqref{eq:d_comparable} we find
\begin{equation}
	\label{eq:AW_implies_d}
	\lim_{k \to \infty} \AW_p(\fp X^k, \fp X) \implies \lim_{k\to \infty} d(\fp X^k, \fp X).
\end{equation}
On the other hand, to deduce the reverse implication of \eqref{eq:AW_implies_d}, let $(\fp X^k)_{k \in \N}$ be a $d$-convergent sequence with limit $\fp X$.
Then the sequence is $\W_p$-precompact and therefore  $\AW_p$-precompact by Theorem \refeq{thm:precompact}.
Therefore, there exist $\fp Y \in \FFP_p$ and a subsequence with $
\lim_{j \to \infty} \AW_p(\fp X^{k_j}, \fp Y) = 0$.
By \eqref{eq:AW_implies_d} this sequence also converges w.r.t.\ $d$, thus, the triangle inequality yields $d(\fp X, \fp Y) = 0$.
Finally, as $d$ is a metric, we get that $\fp X = \fp Y$ and thus $\lim_{k \to \infty} \AW_p(\fp X^k, \fp X) = 0$.

\subsection{Adapted topologies}
\label{ssec:adapted_topologies}

In order to capture the properties of filtrations, numerous authors have introduced extensions of the weak topology of measures on $\mathcal P(\mathcal X)$, which we frame in our setting and briefly introduce below. For a thorough overview of the topic and introduction to those topologies we refer to \cite{BaBaBeEd19b} and the references therein.

\begin{enumerate}[label=(\arabic*)]
	\item[\mylabel{it:Aldous}{(A)}]
			Aldous \cite{Al81} introduces the \emph{extended weak topology} $\tau_\textrm{A}$ by associating a process $\fp X \in \FFP$ with a measure-valued martingale $\pp^1(\fp X)$, the so-called \emph{prediction process}, that is here
			\begin{equation}
				\label{eq:def_Aldous_prediction_process}
				\pp^1(\fp X) := \big( \Law( X | \F_t^\fp X) \big)_{t = 1}^N \in \Pc(\X)^N,
			\end{equation}
			where $\Law( X | \F_t^\fp X)$ is the conditional law of $X$ given $\F_t^\fp X$.
			Then $\tau_\textrm{A}$ is defined as the initial topology induced by $\fp X \mapsto \Law(\pp^1(\fp X))$ when $\Pc(\Pc(\X)^N)$ is equipped with the weak topology.
	\item[\mylabel{it:Hoover-Keisler}{(HK)}]
			Hoover-Keisler \cite{HoKe84} introduce an increasing sequence of topologies $\tau^r_{\textrm{HK}}$ on $\FFP$ where $r \in \N \cup \{0, \infty \}$ is called the rank.
			This is achieved by iterating Aldous' construction of the prediction process.
			Set $\pp^0(\fp X) := X$ and, recursively define, for $r \in \N \cup \{ \infty \}$,
			\begin{equation}
				\label{eq:def_Hoover-Keisler_prediction_process}
				\pp^r(\fp X) := \big( \Law( \pp^{r - 1} | \F_t ) \big)_{t = 1}^N,
			\end{equation}
			and $\pp(\fp X) := \pp^\infty(\fp X)$.
			Analogously to \ref{it:Aldous}, for $r \in \N \cup \{ 0 , \infty \}$, $\tau_{\textrm{HK}}^r$ is given by the initial topology w.r.t.\ $\fp X \mapsto \Law( (\pp^k(\fp X))_{k = 0}^r )$.
			We remark that $\tau_{\textrm{HK}}^0$ is equivalent to weak convergence of the law, $\tau_{\textrm{HK}}^1 = \tau_{\textrm{A}}$, and $\tau_{\textrm{HK}}^{N-1} = \tau_{\textrm{HK}}^r$ for $r \ge N$ (see \cite{BaBePa21}) and simply write then $\tau_\textrm{HK} := \tau_\textrm{HK}^{N-1}$. 
			
	\item[\mylabel{it:Optimal_stopping}{(OS)}]
			The optimal stopping topology $\tau_{\textrm{OS}}$ is defined in \cite{BaBaBeEd19b} as the initial topology w.r.t.\ the family of maps
			\begin{equation}
				\fp X \mapsto \inf \big\{ \E_{\P^\fp X}[c(\rho, X)] \colon \rho \text{ is } (\F_t^\fp X)_{t=1}^N\text{-stopping time} \big\},
			\end{equation}
			where $c \colon \{1,\ldots,N\} \times \X \to \R$ is continuous, bounded, and non-anticipative, that is $c(t,x) = c(t,y)$ if $x_{1:t} = y_{1:t}$ for $(t,x), (t,y) \in \{1,\ldots,N\} \times \X$.
	\item[\mylabel{it:Hellwig}{(H)}]
		The information topology $\tau_{\textrm{H}}$ of Hellwig \cite{He96} is based on a similar point of view as \ref{it:Aldous} and \ref{it:Hoover-Keisler}.
		Properties of the filtration are encoded in the laws
		\begin{equation}
			\label{eq:def_Hellwig}
			\Law\big( X_{1:t},\Law\big( X_{t + 1:N} | \F_t^\fp X \big) \big), \quad 1 \le t \le N,
		\end{equation}
		that are measures on $\Pc(\X_{1:t} \times \Pc(\X_{t + 1:N}))$.
	\item[\mylabel{it:Bonnier-Liu-Oberhauser}{(BLO)}]
		Let the path space $\X$ be the $N$-fold product of a separable Banach space $V$, i.e., $\X = V^N$.
		In this setting, Bonnier-Liu-Oberhauser \cite{BoLiOb21} embed $\FFP$ into graded linear spaces $V_r$ via higher rank expected signatures, where $r \in \N \cup \{ 0, \infty \}$ is again the rank, and define $\tau_{\textrm{BLO}}^r$ as the initial topology w.r.t.\ the corresponding embedding $\Phi_r \colon \FFP \to V_r$.
\end{enumerate}

\begin{remark}
	In case that $d_\X$ is an unbounded metric on $\X$, we will fix for the rest of the paper $p \in [1,\infty)$ and consider the subset $\FFP_p$ with the following topological adaptation.
	The topologies \ref{it:Aldous}, \ref{it:Hoover-Keisler}, \ref{it:Optimal_stopping}, \ref{it:Hellwig} and \ref{it:Bonnier-Liu-Oberhauser} are then refined by additionally requiring continuity of
	\begin{equation}
		\label{eq:def_moment_convergence}
		\FFP_p \ni \fp X \mapsto \E_{\P^\fp X} \big[ d_\X^p(\hat x, X)\big].
	\end{equation}
	To avoid notational excess, we state all results on $\FFP_p$ for some $p \in [1,\infty)$.
	All results are also true when replacing $\FFP_p$ with $\FFP$ (and if necessary $d_\X$ with, for example, $d_\X \wedge 1$). 
\end{remark}

Besides using the powerful concept of initial topologies, various authors have constructed adapted topologies based on ideas from optimal transportation.
The essence of this approach is to encode filtrations into constraints for the set of couplings and thereby construct modifications of the Wasserstein distance suitable for processes.
To illustrate the idea, recall that optimal transport has so-called transport maps $T \colon \X \to \X$ at its core, satisfying the push-forward condition $T_\# \P = \Q$ for $\P,\Q\in\Pc(\X)$.
We refer to \cite{Vi09} for a comprehensive overview on optimal transport.
In our context, where $\P$ and $\Q$ are laws of processes, \emph{causal optimal transport} suggests to use \emph{adapted} maps in order to transport $\P$ to $\Q$, i.e., $T_\# \P = \Q$ and $T$ is non-anticipative, which means
\[
	T(X) = \big(T_1(X_1), T_2(X_{1:2}),\ldots, T_N(X)\big).
\]
When $X$ resp.\ $Y$ denote the first resp.\ second coordinate projection from $\X \times \X \to \X$, then this additional adaptedness constraint on couplings can be formulated as
\begin{equation}
	\label{eq:def_causal_couplings}
	\cpla(\P,\Q) := \big\{ \pi \in \cpl(\P,\Q) \colon X \perp_{X_{1:t}} Y_{1:t} \text{ under $\pi$ for } t = 1,\ldots,N-1  \big\},
\end{equation}
where, for $\sigma$-algebras $\mathcal A, \mathcal B, \mathcal C$ on some probability space, $\mathcal A \perp_\mathcal B \mathcal C$ denotes conditional independence of $\mathcal A$ and $\mathcal C$ given $\mathcal B$.
Elements of $\cpla(\P,\Q)$ are called causal couplings.
When one symmetrices \eqref{eq:def_causal_couplings} one obtains the set of \emph{bicausal couplings} $\cplba(\P,\Q)$, that are $\pi \in \cpla(\P,\Q)$ such that $(Y,X)_\# \pi \in \cpla(\Q,\P)$.
These definitions can be easily extended to $\FFP$.
\begin{definition}[Causal and bicausal couplings]
	\label{def:causality}
	Let $\fp X, \fp Y \in \FFP$.
	For $s,t \in \{0,\ldots,N\}$ we denote by $\F_{s,t}^{\fp X, \fp Y}$
	the $\sigma$-algebra on $\Omega^\fp X \times \Omega^\fp Y$ given by $\F_s^\fp X \otimes \F_t^\fp Y$ under the convention that $\F_0^\fp X$ and $\F_0^\fp Y$ are the corresponding trivial $\sigma$-algebras.
	A probability $\pi$ on the measurable space $(\Omega^\fp X \times \Omega^\fp Y, \F^\fp X \otimes \F^\fp Y)$ is called causal if, under $\pi$,
	\begin{equation}
		\label{eq:def_causal_FP}
		\F^{\fp X,\fp Y}_{N,0} \perp_{\F_{t,0}^{\fp X,\fp Y}} \F^{\fp X,\fp Y}_{0,t}.
	\end{equation}
	We call $\pi$ bicausal if it additionally satisfies
	\begin{equation}
		\label{eq:def_anticausal_FP}
		\F^{\fp X,\fp Y}_{0,N} \perp_{\F_{0,t}^{\fp X,\fp Y}} \F^{\fp X,\fp Y}_{t,0}.
	\end{equation}
	Finally, we write $\cpla(\fp X, \fp Y)$ resp.\ $\cplba(\fp X,\fp Y)$ for the set of causal resp.\ bicausal probabilities with first marginal $\P^\fp X$ and second marginal $\P^\fp Y$.
\end{definition}

\begin{enumerate}
	\item[\mylabel{it:SCW}{(SCW)}]
		Lassalle \cite{La13} and Backhoff et al.\ \cite{BaBeLiZa16} coin the notion of causality, see Definition \ref{def:causality}, and introduce the causal Wasserstein ``distance'' $\CW_p$ on $\Pc_p(\X)$.
		For $\fp X,\fp Y \in \FFP_p$ we have
		\begin{equation}
			\label{eq:def_CW}
			\CW_p^p(\fp X, \fp Y) :=
			\inf_{\pi \in \cpla(\fp X, \fp Y)}
			\E_{\pi} \big[ d_\X^p(X,Y) \big].
		\end{equation}
		Clearly, $\CW_p$ is not a metric as it lacks symmetry, which motivates to consider the so-called \emph{symmetrized causal Wasserstein distance}, see \cite{BaBaBeEd19b},
		\begin{equation}
			\label{eq:def_symmetrized_CW}
			\SCW_p(\fp X, \fp Y) := \max \big\{ \CW_p(\fp X, \fp Y), \CW_p(\fp Y, \fp X)\big\},
		\end{equation}
		which constitutes a metric on $\FFP_p$. 
		We write $\tau_{\SCW}$ for the induced topology.
	\item[\mylabel{it:AW}{(AW)}]
		Instead of symmetrizing as in \eqref{eq:def_symmetrized_CW}, one can directly symmetrize the definition on the level of couplings via the notion of bicausal couplings.
		Approaches in this spirit but to different extents go back to R\"uschendorf \cite{Ru91}, Pflug-Pichler \cite{PfPi12}, Bion-Nadal-Talay \cite{BiTa19}, and Bartl et al.\ \cite{BaBePa21}.
		We define the adapted Wasserstein distance of $\fp X, \fp Y \in \FFP_p$ by
		\begin{equation}
			\label{eq:def_AW}
			\AW_p^p(\fp X, \fp Y) :=
			\inf_{\pi \in \cplba(\fp X,\fp Y)}
			\E_\pi \big[ d_\X^p(X, Y) \big].
		\end{equation}
		The adapted Wasserstein distance is a metric on $\FFP_p$ and we denote its induced topology by $\tau_{\AW}$.
	\item[\mylabel{it:CW}{(CW)}]
		Finally, we introduce here a new mode of convergence, the so-called topology of causal convergence $\tau_{\CW}$, which we describe below:
		A neighbourhood basis of $\fp X \in \FFP_p$ is given by
		\begin{equation}
			\big\{ \fp Y \in \FFP_p \colon \CW_p(\fp X, \fp Y) < \epsilon \big\},
		\end{equation}
		where $\epsilon > 0$.
		Hence, $\tau_\CW$ can be equivalently described by
		\begin{equation}
			\fp X^k \to \fp X \text{ in }\tau_\CW \iff \CW_p(\fp X, \fp X^k) \to 0.
		\end{equation}
\end{enumerate}

\begin{remark}
	\label{rem:causal topologies are ordered}
	It is apparent from the definitions in \eqref{eq:def_Wasserstein_distance}, \eqref{eq:def_CW}, \eqref{eq:def_symmetrized_CW} and \eqref{eq:def_AW} that
	\begin{equation}
		\label{eq:distances are ordered}
		\W_p( \fp X, \fp Y) \le \CW_p(\fp X, \fp Y) \le \SCW_p(\fp X, \fp Y) \le \AW_p(\fp X, \fp Y),
	\end{equation}
	for $\fp X,\fp Y \in \FFP_p$.
	Hence, we have $\tau_{\W} \subseteq \tau_{\CW} \subseteq \tau_{SCW} \subseteq \tau_{\AW}$.
\end{remark}

\subsection{Characterizations of the adapted weak topology}

In this subsection we formulate the main results of this paper.
The core ingredient in order to prove the main results, Theorems \ref{thm:topolgies_FFP} and \ref{thm:topologies_plain}, and also Proposition \ref{prop:topologies_weak}, is the following simple observation of topological nature.

\begin{lemma}
	\label{lem:trace topology}
	Let $(\mathcal A, \tau'), ( \mathcal A, \tau)$ be topological spaces that satisfy  the following:
	\begin{enumerate}[label = (\arabic*)]
		\item \label{it:the lemma sequential}
			$(\mathcal A,\tau')$ and $(\mathcal A,\tau)$ are sequential topological spaces.
		\item \label{it:the lemma finer topology}
			The topology $\tau$ is a least as fine as $\tau'$, that is $\tau \supseteq \tau'$.
		\item \label{it:the lemma relative compactness} 
			If $M \subset \mathcal A$ is $(\mathcal A,\tau')$-relatively compact then $M$ is $(\mathcal A, \tau)$-relatively compact.
		\item \label{it:the lemma Hausdorff}
		$(\mathcal A, \tau')$ is Hausdorff.
	\end{enumerate}
	Then we have $(\mathcal A, \tau) = (\mathcal A, \tau')$.
\end{lemma}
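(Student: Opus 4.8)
The plan is to leverage the elementary fact highlighted in the introduction --- that two comparable topologies, one compact and one Hausdorff, must coincide --- but to apply it not globally on $\mathcal A$, rather on the small compact set formed by a $\tau'$-convergent sequence together with its limit. The sequentiality assumption \ref{it:the lemma sequential} lets us reduce the equality of the two topologies to a statement about such sequences.

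Since $\tau \supseteq \tau'$ by \ref{it:the lemma finer topology}, it remains to prove $\tau \subseteq \tau'$, i.e.\ that every $\tau$-closed set is $\tau'$-closed. As $(\mathcal A, \tau')$ is sequential, it suffices to check that every $\tau$-closed set $C$ is $\tau'$-sequentially closed. So I would fix a sequence $(x_n)_{n \in \N} \subseteq C$ with $x_n \to x$ in $\tau'$ and set $K := \{x_n : n \in \N\} \cup \{x\}$; the goal becomes $x \in C$.

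First I would collect the topological properties of $K$. Being a $\tau'$-convergent sequence together with its limit, $K$ is $\tau'$-compact, and since $\tau'$ is Hausdorff by \ref{it:the lemma Hausdorff} it is $\tau'$-closed; as $\tau \supseteq \tau'$, it is then also $\tau$-closed. In particular $K$ is $\tau'$-relatively compact, so \ref{it:the lemma relative compactness} gives that $\overline{K}^{\tau} = K$ is $\tau$-compact. Thus $(K, \tau|_K)$ is compact, $(K, \tau'|_K)$ is compact and Hausdorff, and $\tau'|_K \subseteq \tau|_K$; the identity map $(K,\tau|_K) \to (K,\tau'|_K)$ is therefore a continuous bijection from a compact space onto a Hausdorff space, hence a homeomorphism, and $\tau|_K = \tau'|_K$. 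Consequently $x_n \to x$ in $\tau|_K$, hence in $\tau$; since $C$ is $\tau$-closed it is $\tau$-sequentially closed, so $x \in C$, as desired.

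The only point requiring care is the bookkeeping that promotes ``$\tau'$-relatively compact'' to ``$\tau$-compact'': one must verify that $K$ is closed in \emph{both} topologies before invoking \ref{it:the lemma relative compactness}, and this is precisely what \ref{it:the lemma Hausdorff} together with $\tau \supseteq \tau'$ provide. Beyond that, the argument is just the ``comparable compact Hausdorff topologies agree'' principle localized to $K$; note in particular that sequentiality of $\tau$ itself is not actually used, since $\tau$-closed sets are automatically $\tau$-sequentially closed.
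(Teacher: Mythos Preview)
Your proof is correct and rests on the same mechanism as the paper's: take a $\tau'$-convergent sequence, use \ref{it:the lemma relative compactness} to obtain $\tau$-compactness of the relevant set, and then use the Hausdorff assumption \ref{it:the lemma Hausdorff} to force the $\tau$-limit to agree with the $\tau'$-limit. The only difference is packaging: the paper argues directly by extracting a $\tau$-convergent subsequence and invoking uniqueness of limits, whereas you first show $K=\{x_n\}\cup\{x\}$ is $\tau$-closed (via $\tau'$-compact $\Rightarrow$ $\tau'$-closed $\Rightarrow$ $\tau$-closed) and then apply the ``compact--Hausdorff comparable topologies coincide'' principle to $K$ in one stroke. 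Your observation that sequentiality of $\tau$ is not actually needed is correct and also applies to the paper's argument; only sequentiality of $\tau'$ is used, to reduce $\tau\subseteq\tau'$ to a statement about $\tau'$-convergent sequences.
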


Note that Lemma \refeq{lem:trace topology} in combination with Theorem \refeq{thm:precompact} have Corollary \refeq{cor:equivalent_topologies} as a consequence.

Next, we provide characterizations of the adapted weak topology on $\FFP_p$.

The equivalence of $\tau_\textrm{HK}$ and the adapted Wasserstein-topology, $\tau_\AW$, is due to \cite{BaBePa21} whereas the characterization in terms of the symmetric causal Wasserstein-topology, $\tau_\SCW$, is novel.
Moreover, we remark that the equivalence of the higher rank expected signature-topology, $\tau_{\textrm{BLO}}$ and $\tau_{\textrm{HK}}$ was already known when, for $t \in \{1,\ldots,N\}$, $\X_t = V$ and $V$ is a compact subset of a separable Banach space, see \cite[Theorem 2]{BoLiOb21}.

\begin{theorem}
	\label{thm:topolgies_FFP}
	On $\FFP_p$ we have
	\begin{equation}
		\tau_{\textrm{HK}} =
		\tau_{\SCW} =
		\tau_{\AW}.
	\end{equation}
	If $\X_t = \R^d$, $1 \le t \le N$, then these topologies also coincide with $\tau_{\textrm{BLO}}^{N-1}$, and $\tau_{\textrm{BLO}}^r = \tau_{\textrm{HK}}^r$.
\end{theorem}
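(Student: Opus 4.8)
The plan is to deduce all the claimed equalities from Lemma~\ref{lem:trace topology} (the abstract ``comparable topologies that are both compact Hausdorff and satisfy a joint precompactness property coincide'' principle) together with the Prokhorov-type Theorem~\ref{thm:precompact}. The first step is to show that all the topologies in sight are \emph{sequential} — indeed they are all metrizable or second-countable — so hypothesis~\ref{it:the lemma sequential} of the lemma is automatic; likewise $(\FFP_p,\W_p)$-type limits separate points at the level of laws but the finer adapted topologies are Hausdorff on the whole of $\FFP_p$ because $\AW_p$ is a genuine metric. The crucial ordering is the chain $\tau_\W\subseteq\tau_\CW\subseteq\tau_{\SCW}\subseteq\tau_{\AW}$ from Remark~\ref{rem:causal topologies are ordered}, so hypothesis~\ref{it:the lemma finer topology} holds between any two consecutive members. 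It thus remains to verify hypothesis~\ref{it:the lemma relative compactness}: a set that is $\W_p$-precompact (equivalently, has precompact set of laws) is $\AW_p$-precompact — but this is exactly Theorem~\ref{thm:precompact}. Hence Lemma~\ref{lem:trace topology} applied with $\tau'=\tau_\W$ and $\tau=\tau_{\AW}$ forces every topology squeezed in between, in particular $\tau_{\SCW}$, to coincide with $\tau_{\AW}$; and the equality $\tau_{\textrm{HK}}=\tau_{\AW}$ is already known from \cite{BaBePa21}, or can be re-derived the same way once one checks $\tau_\W\subseteq\tau_{\textrm{HK}}\subseteq\tau_{\AW}$ (the first inclusion since $\tau_{\textrm{HK}}^0$ is weak convergence of the law, the second since $\AW_p$-convergence implies convergence of all prediction processes).

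For the second assertion, when $\X_t=\R^d$ one must place $\tau_{\textrm{BLO}}^r$ into the same framework. The plan is: (i) show $\tau_\W\subseteq\tau_{\textrm{BLO}}^r$, which holds because the rank-$0$ part of the expected-signature embedding already records enough to recover convergence of the law together with the $p$-th moment (the signature's degree-one terms are, up to normalization, the coordinates, and the refinement~\eqref{eq:def_moment_convergence} is built in); (ii) show $\tau_{\textrm{BLO}}^r\subseteq\tau_{\textrm{HK}}^r$, i.e.\ the rank-$r$ expected signature map $\Phi_r$ is continuous for $\tau_{\textrm{HK}}^r$ — this is because the higher-rank signature of a filtered process is assembled, by an explicit iterated-integral/iterated-conditional-expectation formula, from the rank-$\le r$ prediction processes $(\pp^k(\fp X))_{k=0}^r$, and these depend $\tau_{\textrm{HK}}^r$-continuously on $\fp X$ by definition of the initial topology, while the iterated integrals against them are continuous and (after the usual moment control) uniformly integrable. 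Then Lemma~\ref{lem:trace topology} with $\tau'=\tau_\W$, $\tau=\tau_{\textrm{BLO}}^r$ and the precompactness input from Theorem~\ref{thm:precompact} again yields $\tau_{\textrm{BLO}}^r=\tau_\W$? — no: rather one runs the lemma on the chain $\tau_{\textrm{BLO}}^r\subseteq\tau_{\textrm{HK}}^r\subseteq\tau_{\AW}$ together with $\tau_\W\subseteq\tau_{\textrm{BLO}}^r$, concluding $\tau_{\textrm{BLO}}^r=\tau_{\textrm{HK}}^r$ for every $r$; specializing to $r=N-1$ and using $\tau_{\textrm{HK}}^{N-1}=\tau_{\textrm{HK}}=\tau_{\AW}$ gives the displayed coincidence.

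I expect the main obstacle to be step~(ii) in the $\R^d$ part: making precise the claim that higher-rank expected signatures are a $\tau_{\textrm{HK}}^r$-continuous function of the prediction-process tower, and in particular obtaining the uniform integrability needed to pass from convergence of laws of prediction processes to convergence of the (unbounded, polynomially growing) iterated signature integrals. One clean way around this is to invoke \cite[Theorem~2]{BoLiOb21} for $V$ \emph{compact}, where $\tau_{\textrm{BLO}}^r=\tau_{\textrm{HK}}^r$ is already established, and then bootstrap to $V=\R^d$ by the standard truncation device — replacing $d_\X$ by $d_\X\wedge1$, or exhausting $\R^d$ by compacts — combined with the moment refinement~\eqref{eq:def_moment_convergence} that is built into all the topologies on $\FFP_p$; the precompactness transfer of Theorem~\ref{thm:precompact} is what keeps the truncation argument from losing mass. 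The rest — checking metrizability/sequentiality, Hausdorffness of $\tau_{\SCW}$ and $\tau_{\textrm{BLO}}^r$ (the latter because $\Phi_r$ is injective on $\FFP$, itself a consequence of \cite{BoLiOb21}), and the elementary inclusions — is routine and will be dispatched quickly.
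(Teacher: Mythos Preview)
There is a genuine gap. You propose to apply Lemma~\ref{lem:trace topology} with $\tau'=\tau_\W$ and $\tau=\tau_\AW$, concluding that ``every topology squeezed in between'' must equal $\tau_\AW$. But hypothesis~\ref{it:the lemma Hausdorff} of the lemma requires the \emph{coarser} topology $\tau'$ to be Hausdorff, and $\W_p$ is \emph{not} point-separating on $\FFP_p$ (two processes can have the same law but inequivalent filtrations; the paper says this explicitly). Were the lemma applicable as you suggest, it would give $\tau_\W=\tau_\AW$, which is false. The chain $\tau_\W\subseteq\tau_{\SCW}\subseteq\tau_\AW$ is only useful for transferring \emph{precompactness} (item~\ref{it:the lemma relative compactness}); it does not transfer Hausdorffness upward.

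The correct application takes $\tau'=\tau_{\SCW}$ directly, and then one must prove that $\SCW_p(\fp X,\fp Y)=0$ implies $\fp X=\fp Y$. You dismiss this as ``routine'', but it is the substantive step: it is Lemma~\ref{lem:SCW = 0 iff AW = 0} in the paper, and its proof requires the conditionally independent gluing of a causal coupling $\fp X\to\fp Y$ with one $\fp Y\to\fp X$ (Section~\ref{ssec:causal.gluing}) followed by an inductive application of the martingale identity Lemma~\ref{lem:martingale lemma} to show that all higher-order prediction processes coincide. Your justification ``because $\AW_p$ is a genuine metric'' establishes Hausdorffness of $\tau_\AW$, not of the coarser $\tau_{\SCW}$. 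The same issue recurs in the $\tau_{\textrm{BLO}}^r$ part: for $r<N-1$ neither $\tau_{\textrm{BLO}}^r$ nor $\tau_{\textrm{HK}}^r$ is Hausdorff on all of $\FFP_p$, so the lemma must be run on the quotient where $\Phi_r$ becomes injective, and the matching of the two induced equivalence relations is again an input from \cite{BoLiOb21} rather than something automatic.
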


When restricting to sets of processes that have a simpler information structure, e.g.\ Markov processes or processes equipped with their natural filtration, there are simpler ways to characterize the adapted weak topology.
This motivates the next definition of higher-order Markov processes where the transition probabilities are allowed to depend on more than its current state.

\begin{definition}
	\label{def:Markov_processes}
	Let $n \in \mathbb N \cup \{ \infty \}$.
	We call a process $\fp X \in \FFP_p$ $n$-th order Markovian (or $n$-th order Markov process) if, for all $1 \le t \le N$,
	\begin{equation}
		\Law(X_{t + 1} | \mathcal F_t^\fp X)
		 = \Law(X_{t + 1} | X_{1 \vee (t - n) : t})
		\quad
		\text{almost surely}.
	\end{equation}
	The set of all $n$-th order Markov processes is denoted by $\FFP_{p,n}^\textrm{Markov}$.
	Moreover, we may call $\infty$-th order Markov processes plain and write $\FFP_p^\textrm{plain} := \FFP^\textrm{Markov}_{p,\infty}$.

	We equip $\FFP_{p,n}^\textrm{Markov}$ with the initial topology $\tau^n_\textrm{Markov}$ that is given by the maps $\fp X \mapsto \Law(T^n_t(X)) \in \Pc_p(\X_{1 \vee (t - n + 1): t} \times \Pc_p(\X_{t + 1}))$ for $1 \le t \le N - 1$, where
	\begin{equation}
		T^n_t(X) :=
		\big(	
			X_{1 \vee (t - n + 1) : t}, \Law( X_{t + 1} | \mathcal F_t^\fp X)
		\big).
	\end{equation}
\end{definition}

\begin{remark}
	To illustrate Definition \ref{def:Markov_processes}, let $n = 1$.
	Clearly, $\FFP_{p,1}^{\textrm{Markov}}$ is the subset of (time-inhomogeneous) Markov processes in $\FFP_p$.
	A family of Markov processes $(\fp X^k)_{k \in \N}$ converges to a Markov process $\fp X$ w.r.t.\ $\tau_{\textrm{Markov}}^1$ if and only if, for $1 \le t \le N-1$,
	\begin{equation}
		\label{eq:rem_Markov_convergence}
		\Law\big(X^k_t, \Law(X^k_{t + 1} | X^k_t)\big) \to
		\Law\big(X_t, \Law(X_{t + 1}| X_t)\big) \quad \text{in }\Pc_p(\X_t \times \Pc_p(\X_{t + 1})).
	\end{equation}
	In particular, if there exist continuous kernels $\kappa_t \colon \X_t \to \Pc_p(\X_{t + 1})$ which satisfy $\kappa_t(X_t) = \Law(X_{t + 1} | X_t)$ almost surely, then convergence in $\tau_{\textrm{Markov}}^1$ can be characterized by the following:
	\begin{align}
		\label{eq:rem_Markov_kernel}
		\lim_{k \to \infty}
		\P^{\fp X^k} \big( \W_p( \kappa_t(X_t^k), \Law(X_{t + 1}^k | X_t^k)) \ge \epsilon \big) &= 0,
		\\
		\label{eq:rem_Markov_time_1}
		\lim_{k \to \infty} \W_p(\fp X^k, \fp X) &= 0,
	\end{align}
	for all $1 \le t \le N-1$ and $\epsilon > 0$, and some $\hat x \in \X$.
	This can be easily deduced, e.g., by using continuity of the kernels $(\kappa_t)_{t = 1}^{N-1}$ and Skorokhod's representation theorem.
\end{remark}

The next result recovers and generalizes the main result of \cite{BaBaBeEd19b}.
The novelty of the next result is two-fold:
On the one hand, the case $n = \infty$ recovers the results of \cite{BaBaBeEd19b} and additionally gives a new description in terms of $\tau^\infty_\textrm{Markov}$.
On the other hand, the case $n \in \N$ extends this result to the subset of $n$-th order Markov processes.

\begin{theorem}[All adapted topologies are equal]
	\label{thm:topologies_plain}
	Let $n, r \in \N \cup \{ \infty \}$.
	Then the trace on $\FFP_{p,n}^\textrm{Markov}$ of the topologies
	$\tau_\textrm{A}$, $\tau_\textrm{HK}^r$, $\tau_\textrm{OS}$, $\tau_\textrm{H}$, $\tau_\CW$, $\tau_\SCW$ and $\tau_\AW$ are the same.
	In particular, they all coincide with the trace of $\tau_{\textrm{Markov}}^n$.
\end{theorem}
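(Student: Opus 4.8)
The plan is to use Lemma~\ref{lem:trace topology} as the central engine. All the topologies in question are ordered: by Remark~\ref{rem:causal topologies are ordered} and the definitions via initial topologies (which are, up to the moment condition \eqref{eq:def_moment_convergence}, coarsenings of richer initial topologies), one has the chain
\[
	\tau_{\textrm{HK}}^0 \subseteq \tau_\textrm{A} = \tau_{\textrm{HK}}^1 \subseteq \cdots \subseteq \tau_{\textrm{HK}} \subseteq \tau_\AW,
\]
and similarly $\tau_\W \subseteq \tau_\CW \subseteq \tau_\SCW \subseteq \tau_\AW$, while $\tau_\textrm{OS}$ and $\tau_\textrm{H}$ sit between $\tau_\W$ and $\tau_{\textrm{HK}}$ (the optimal-stopping and information functionals are continuous in $\tau_{\textrm{HK}}$ because they factor through finitely many prediction-process layers, and they dominate $\tau_\W$ since they already see the law). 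Hence it suffices to show that the \emph{coarsest} topology appearing, which is $\tau^n_\textrm{Markov}$ restricted to $\FFP_{p,n}^\textrm{Markov}$, already agrees with the \emph{finest}, namely the trace of $\tau_\AW$ on $\FFP_{p,n}^\textrm{Markov}$. Once this is done, every intermediate topology is squeezed between two equal ones.

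To apply Lemma~\ref{lem:trace topology} with $\mathcal A = \FFP_{p,n}^\textrm{Markov}$, $\tau' = \tau^n_\textrm{Markov}$ and $\tau = \tau_\AW|_{\FFP_{p,n}^\textrm{Markov}}$, I verify the four hypotheses. Condition~\ref{it:the lemma finer topology}, $\tau' \subseteq \tau$, amounts to checking that each map $\fp X \mapsto \Law(T^n_t(X))$ is $\AW_p$-continuous on $\FFP_{p,n}^\textrm{Markov}$; this follows from the adapted block approximation / stability of conditional laws under $\AW_p$ from \cite{BaBePa21}, using that $T^n_t$ only records $X_{1\vee(t-n+1):t}$ together with the one-step conditional law $\Law(X_{t+1}\mid\F_t^\fp X)$, which for an $n$-th order Markov process equals $\Law(X_{t+1}\mid X_{1\vee(t-n):t})$. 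Conditions~\ref{it:the lemma sequential} and~\ref{it:the lemma Hausdorff}: $\tau_\AW$ is metrizable hence sequential and Hausdorff; $\tau^n_\textrm{Markov}$ is an initial topology w.r.t.\ countably many maps into Polish spaces $\Pc_p(\X_{1\vee(t-n+1):t}\times\Pc_p(\X_{t+1}))$, hence metrizable (so sequential), and it is Hausdorff precisely because the family $(T^n_t)_{t=1}^{N-1}$ separates points of $\FFP_{p,n}^\textrm{Markov}$ — an $n$-th order Markov process is, up to Hoover--Keisler equivalence, determined by its initial law together with the one-step kernels it records, and these are exactly what the $T^n_t$ capture (here it is essential that we work with $n$-th order Markov processes, since a general filtered process is \emph{not} recovered from finitely many such layers).

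The main obstacle is hypothesis~\ref{it:the lemma relative compactness}: a $\tau^n_\textrm{Markov}$-relatively compact set $M$ must be shown $\tau_\AW$-relatively compact. By Theorem~\ref{thm:precompact} it is enough to prove $\{\Law(X)\colon \fp X\in M\}$ is $\W_p$-precompact in $\Pc_p(\X)$, i.e.\ tight with uniformly integrable $p$-th moments. Tightness of the $\tau^n_\textrm{Markov}$-images gives, for each $t$, tightness of $\Law(X_{1\vee(t-n+1):t})$ and of the kernel-valued component in $\Pc_p(\X_{t+1})$; one then reconstructs tightness and moment control of the full law $\Law(X)=\Law(X_1,\dots,X_N)$ by gluing the marginal $\Law(X_{1:n})$ (or $\Law(X_1)$ when $n=1$) with the successive transition kernels, using that precompactness in $\Pc_p(\Pc_p(\X_{t+1}))$ yields a uniform modulus for $\E[\W_p^p(\hat x_{t+1},\kappa)]$ along the kernels. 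This bootstrapping from layerwise compactness to joint compactness is the technical heart; everything else is an application of Lemma~\ref{lem:trace topology} followed by the squeezing argument. Finally, to also pin down the intermediate topologies I record that $\tau_\textrm{A}$, $\tau_\textrm{HK}^r$, $\tau_\textrm{OS}$, $\tau_\textrm{H}$, $\tau_\CW$, $\tau_\SCW$ each lie (on $\FFP_{p,n}^\textrm{Markov}$) between $\tau^n_\textrm{Markov}$ and $\tau_\AW$, which are now shown equal; for $r\ge N$ one uses $\tau_{\textrm{HK}}^r=\tau_{\textrm{HK}}^{N-1}$ as recalled in~\ref{it:Hoover-Keisler}, and for $\tau_\CW$ that $\W_p\le\CW_p\le\AW_p$ from \eqref{eq:distances are ordered} makes causal convergence squeezed as well.
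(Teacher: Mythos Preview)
Your overall strategy---apply Lemma~\ref{lem:trace topology} with $\tau'=\tau^n_{\textrm{Markov}}$ and $\tau=\tau_\AW$ on $\mathcal A=\FFP_{p,n}^{\textrm{Markov}}$, then squeeze the remaining topologies---matches the paper's, and for $\tau_\textrm{H}$, $\tau_\textrm{A}$, $\tau_{\textrm{HK}}^r$, $\tau_\textrm{OS}$ and $\tau_\SCW$ the squeeze is legitimate once you know each is finer than $\tau^n_{\textrm{Markov}}$. There are, however, two genuine gaps.

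First, your treatment of $\tau_\CW$ does not go through. From $\W_p\le\CW_p\le\AW_p$ you only get $\tau_\W\subseteq\tau_\CW\subseteq\tau_\AW$, not $\tau^n_{\textrm{Markov}}\subseteq\tau_\CW$. And $\tau_\W\subsetneq\tau^n_{\textrm{Markov}}$ on $\FFP_{p,n}^{\textrm{Markov}}$ (think of Markov processes $X^k$ with $X_1^k=\pm 1/k$ and $X_2^k=\mathrm{sign}(X_1^k)$: their laws converge in $\W_p$ but their one-step kernels do not), so no sandwich is available. The paper therefore does \emph{not} squeeze $\tau_\CW$; it runs Lemma~\ref{lem:trace topology} a second time with $\tau'=\tau_\CW$, using that $\tau_\CW$ is Hausdorff on $\FFP_{p,n}^{\textrm{Markov}}\subseteq\FFP_p^{\textrm{plain}}$ (Lemma~\ref{lem:CW plain}) and that $\tau_\CW$-relatively compact sets are $\tau_\AW$-relatively compact (Proposition~\ref{prop:rel_comp_CW}, which rests on the $1$-Lipschitz continuity of $\fp Y\mapsto\CW_p(\fp X,\fp Y)$ with respect to $\SCW_p$). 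You need this separate argument.

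Second, in your verification of item~\ref{it:the lemma relative compactness} you write that ``by Theorem~\ref{thm:precompact} it is enough to prove $\{\Law(X):\fp X\in M\}$ is $\W_p$-precompact''. Theorem~\ref{thm:precompact} gives $\AW_p$-precompactness in $\FFP_p$, but Lemma~\ref{lem:trace topology} requires relative compactness in $(\mathcal A,\tau)=(\FFP_{p,n}^{\textrm{Markov}},\tau_\AW)$, i.e.\ the $\AW_p$-limit must again be $n$-th order Markov. The paper closes this in Proposition~\ref{prop:rel_comp_Markov}: if $\fp X^{k_j}\to\fp Y$ in $\AW_p$, then by $\AW_p$-continuity of $\fp X\mapsto\Law(T^n_t(X))$ one has $\Law(T^n_t(Y))=\Law(T^n_t(X))$, which forces $\fp Y\in\FFP_{p,n}^{\textrm{Markov}}$. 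You have the ingredient (you invoke this continuity for item~\ref{it:the lemma finer topology}), but you never use it to keep the limit inside $\mathcal A$, and without that step the lemma cannot be applied.
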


\subsection{Characterization of the weak topology}

The line of reasoning prescribed by Lemma \ref{lem:trace topology} can be utilized outside of the framework of the adapted weak topology which is demonstrated by the proposition below.
\begin{proposition}
	\label{prop:topologies_weak}
	The $p$-Wasserstein topology on $\Pc_p(\R^d)$ can be metrized by
	\begin{equation}
		\label{eq:def_WT_metric}
		\mathcal V_{p}^p(\P,\Q) :=
		\max \big\{ V_p(\P,\Q), V_p(\Q,\P) \big\},
	\end{equation}
	where $\R^d$ is equipped with the euclidean norm $\| \cdot \|$ and
	\begin{equation}
		\label{eq:def_WT}
		V_p^p(\P,\Q) := 
		\inf_{\pi \in \cpl(\P,\Q)}
		\E_\pi \Big[ \big\| \E_\pi\big[ X - Y \big| X \big] \big\|^p \Big].
	\end{equation}
\end{proposition}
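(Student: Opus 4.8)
The plan is to apply Lemma~\ref{lem:trace topology} with $\mathcal A = \Pc_p(\R^d)$, $\tau'$ the $p$-Wasserstein topology, and $\tau$ the topology induced by the metric $\mathcal V_p$. To do this I first need to verify that $\mathcal V_p$ is genuinely a metric: symmetry is built in by the $\max$, the triangle inequality follows from a standard gluing argument (glue optimal or near-optimal couplings of $(\P,\Q)$ and $(\Q,\R)$ along their common marginal $\Q$ and use that $\pi \mapsto \E_\pi[\E_\pi[X-Y\mid X]]$ behaves well under conditioning, together with the triangle inequality in $L^p$), and point-separation is exactly the nontrivial direction of the comparison below. Since $\tau'$ is metrizable and $\tau$ will be shown equal to it, both are sequential, so hypothesis~\ref{it:the lemma sequential} is automatic once we are done; hypothesis~\ref{it:the lemma Hausdorff} holds because the Wasserstein topology is metrizable.

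The heart of the matter is the comparison
\begin{equation}
	\label{eq:weak_V_comparison}
	\mathcal V_p(\P,\Q) \le \W_p(\P,\Q),
\end{equation}
which simultaneously gives hypothesis~\ref{it:the lemma finer topology} ($\tau \subseteq \tau'$, hence $\tau' \supseteq \tau$ is automatic the other way only after we close the loop --- careful: the lemma wants $\tau \supseteq \tau'$, so I actually want $\W_p$-convergence to imply $\mathcal V_p$-convergence, i.e. I want to set $\tau = \tau_{\W_p}$, $\tau' = \tau_{\mathcal V_p}$ after establishing \eqref{eq:weak_V_comparison}) and, combined with precompactness, everything else. Concretely: \eqref{eq:weak_V_comparison} follows from Jensen's inequality, since for any $\pi \in \cpl(\P,\Q)$ we have $\E_\pi[\|\E_\pi[X-Y\mid X]\|^p] \le \E_\pi[\|X-Y\|^p]$, and taking the infimum over $\pi$ on both sides, then the $\max$ over the two orderings (using the symmetry of $\|X-Y\|^p$), yields $\mathcal V_p^p(\P,\Q) \le \W_p^p(\P,\Q)$. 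Thus $\tau_{\mathcal V_p} \subseteq \tau_{\W_p}$, and $\mathcal V_p$-precompact sets contain $\W_p$-precompact ones trivially in the wrong direction --- so instead I run the lemma with $\tau' := \tau_{\mathcal V_p}$ and $\tau := \tau_{\W_p}$: then \ref{it:the lemma finer topology} holds by \eqref{eq:weak_V_comparison}, \ref{it:the lemma relative compactness} needs that $\mathcal V_p$-relatively compact sets are $\W_p$-relatively compact, and \ref{it:the lemma Hausdorff} needs $\mathcal V_p$ to be Hausdorff, i.e. point-separating.

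Two tasks remain, and these are where the real work lies. First, \textbf{point separation}: if $\mathcal V_p(\P,\Q)=0$ then $\P=\Q$. Take couplings $\pi_k$ with $\E_{\pi_k}[\|\E_{\pi_k}[X-Y\mid X]\|^p] \to 0$ and $\E_{\pi_k}[\|\E_{\pi_k}[Y-X\mid Y]\|^p]\to 0$. The map $x \mapsto x + \E_{\pi_k}[Y-X \mid X=x]$ is (a version of) the barycentric projection pushing $\P$ toward $\Q$; the first condition says it converges to the identity in $L^p(\P)$, and one shows its pushforward under $\P$ converges weakly to $\Q$ (this is the weak-optimal-transport input of Gozlan--Roberto--Samson--Tetali: a sequence of couplings whose barycentric projections collapse forces the marginals together). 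Combining with the symmetric statement and a moment bound pins down $\P=\Q$. Second, \textbf{the compactness hypothesis}: if $M$ is $\mathcal V_p$-relatively compact I must show $M$ is $\W_p$-relatively compact, equivalently (Prokhorov plus uniform integrability of the $p$-th moments) that $M$ is tight with uniformly integrable $p$-moments. The main obstacle --- and the step I expect to fight hardest with --- is precisely this: $\mathcal V_p$ only controls barycentric projections, which a priori lose mass and moment information, so I need to argue that $\mathcal V_p$-Cauchy sequences cannot let moments escape to infinity nor mass escape to spatial infinity. I would handle tightness by noting that $\| \E_\pi[X-Y\mid X]\|^p$ small on most of the space, together with $\Q$ being a fixed reference measure in the relatively-compact-closure sense, forces $\P$ to concentrate where $\Q$ does; and I would handle uniform integrability by a truncation argument combined with the lower semicontinuity of $\W_p$ and the comparison $\W_p \ge \mathcal V_p$ applied to truncated measures. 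Once both are in hand, Lemma~\ref{lem:trace topology} delivers $\tau_{\W_p} = \tau_{\mathcal V_p}$, which is the assertion.
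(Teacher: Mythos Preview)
Your overall strategy---apply Lemma~\ref{lem:trace topology} with $\tau' = \tau_{\mathcal V_p}$ and $\tau = \tau_{\W_p}$, after checking $\mathcal V_p \le \W_p$, point-separation, and relative compactness---matches the paper exactly. However, the two substantive steps you single out are handled quite differently there, and your sketches for them contain genuine gaps.

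For point separation, your claim that the pushforward of $\P$ under $x \mapsto \E_{\pi_k}[Y\mid X=x]$ ``converges weakly to $\Q$'' is wrong: the hypothesis $\E_{\pi_k}[\|X - \E_{\pi_k}[Y\mid X]\|^p]\to 0$ says this map tends to the identity in $L^p(\P)$, so its pushforward converges to $\P$, not $\Q$. A salvageable version notes that this pushforward is always $\le_{\mathrm{cx}} \Q$, whence $\P \le_{\mathrm{cx}} \Q$ in the limit, and symmetrically $\Q \le_{\mathrm{cx}} \P$, so $\P=\Q$. The paper's argument is cleaner and uses no limits: $V_p(\P,\Q)=0$ means there exists a martingale coupling $\pi\in\cpl(\P,\Q)$ (attainment holds for this weak OT problem), and likewise $\tilde\pi\in\cpl(\Q,\P)$; gluing these into a three-step Markov martingale $(X_1,X_2,X_3)$ with $X_1\sim X_3$ and invoking Lemma~\ref{lem:martingale lemma} forces $X_1=X_2$ a.s., hence $\P=\Q$.

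For relative compactness, your plan (``force $\P$ to concentrate where $\Q$ does'', truncation, lower semicontinuity) is not yet a proof, and the paper sidesteps all of it via a single sharp identity. The missing idea is the projection formula from \cite{BaBePa18},
\[
V_p(\P^k,\P) \;=\; \inf_{\Q \,\le_{\mathrm{cx}}\, \P} \W_p(\P^k,\Q),
\]
combined with the fact that $\{\Q : \Q \le_{\mathrm{cx}} \P\}$ is $\W_p$-compact in $\Pc_p(\R^d)$ (local compactness of $\R^d$ plus a De la Vall\'ee--Poussin argument). Picking minimizers $\Q^k \le_{\mathrm{cx}} \P$ in the display gives $\W_p(\P^k,\Q^k)\to 0$ with the $\Q^k$ lying in a fixed $\W_p$-compact set, and $\W_p$-relative compactness of $(\P^k)_k$ is immediate. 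This replaces, in one line, the tightness and uniform-integrability estimates you were bracing for.
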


\begin{remark}
	The minimization problem depicted in \eqref{eq:def_WT} is a so-called \emph{weak optimal transport problem} \cite{GoRoSaTe17}, which is a generalization of optimal transport.
	In particular, \eqref{eq:def_WT} vanishes if and only if there exists a martingale coupling between $\P$ and $\Q$.
	For more details we refer to \cite{GoRoSaTe14,BaBePa18} and the references therein.
\end{remark}

\section{Proofs}

In order to prove the main results, we will verify the assumptions of Lemma \ref{lem:trace topology}.
By doing so, we will encounter variuous martingales which can be properly treated thanks to the next well-known fact.
We recall that a process $X = (X_t)_{t = 1}^N$ taking values in $\Pc(\X)$ is  called a measure-valued martingale with values in $\Pc(\X)$ if, for $f \in C_b(\X)$, the real-valued, bounded process $(X_t(f))_{t = 1}^N$ is a martingale.
Here, we write $p(f)$ for the integral $\int f \, dp$ when $p \in \Pc(\X)$ and $f \in C_b(\X)$.

\begin{lemma}
	\label{lem:martingale lemma}
	Let $X_1, X_2, X_3$ be measure-valued martingale taking values in $\mathcal P(\mathcal X)$ where $\X$ is a Polish space.
	If $X_1 \sim X_3$, then $X_1 = X_2 = X_3$ almost surely.
\end{lemma}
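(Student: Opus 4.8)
\emph{Proof plan.} The plan is to reduce the statement to the classical one–dimensional fact that a square–integrable martingale whose first and last marginal coincide in law is constant, and then to transport this back to the level of random measures by testing against a countable convergence–determining family.

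First I would fix a countable set $\{f_n : n \in \N\} \subseteq C_b(\X)$ which separates the points of $\Pc(\X)$ (equivalently, is convergence–determining); such a family exists because $\X$ is Polish, and after rescaling we may assume $\|f_n\|_\infty \le 1$. For a fixed $f = f_n$ the real process $(X_1(f), X_2(f), X_3(f))$ is a bounded, hence square–integrable, martingale with respect to the filtration generated by $(X_1,X_2,X_3)$; write $(\F_t)$ for that filtration. By the tower property $\E[X_3(f)\mid\F_1] = \E[X_2(f)\mid\F_1] = X_1(f)$, so $X_3(f)-X_1(f)$ is orthogonal in $L^2$ to the $\F_1$–measurable bounded variable $X_1(f)$, and therefore $\E[X_3(f)^2] = \E[X_1(f)^2] + \E[(X_3(f)-X_1(f))^2]$. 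Since $X_1 \sim X_3$, pushing the law forward under the measurable map $p \mapsto p(f)$ yields $X_1(f)\sim X_3(f)$ and in particular $\E[X_1(f)^2] = \E[X_3(f)^2] < \infty$; hence $\E[(X_3(f)-X_1(f))^2]=0$, i.e.\ $X_1(f)=X_3(f)$ almost surely. As $X_1(f)$ is also $\F_2$–measurable, $X_2(f) = \E[X_3(f)\mid\F_2] = \E[X_1(f)\mid\F_2] = X_1(f)$ almost surely. Thus $X_1(f_n)=X_2(f_n)=X_3(f_n)$ almost surely, for every $n$.

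Finally I would intersect these countably many full–probability events to obtain a single event of probability one on which $X_1(f_n)=X_2(f_n)=X_3(f_n)$ for all $n$ simultaneously; since $\{f_n\}$ separates the points of $\Pc(\X)$, this forces $X_1=X_2=X_3$ there, which is the claim. I do not anticipate a genuine obstacle: integrability is automatic because all test functions are bounded, and the only point demanding a little care is the passage from ``for each bounded continuous $f$, equality holds a.s.'' to ``equality of the random measures holds a.s.'', which is precisely what the countable convergence–determining family provides. The same argument applies verbatim to a measure–valued martingale with any finite number of time steps.
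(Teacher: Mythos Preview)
Your proof is correct and follows essentially the same route as the paper: reduce to bounded real-valued martingales by testing against a countable point-separating family in $C_b(\X)$, use $X_1\sim X_3$ to force equality of second moments, and conclude via the $L^2$-orthogonality of martingale increments. The only cosmetic difference is that the paper derives $X_1(f)=X_2(f)$ directly from $\E[(X_2(f)-X_1(f))^2]=\E[X_2(f)^2]-\E[X_1(f)^2]=0$, whereas you first establish $X_1(f)=X_3(f)$ and then recover $X_2(f)$ by conditioning.
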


\begin{proof}
	Since there exists a countable family in $C_b(\mathcal X)$ that separates points in $\mathcal P(\mathcal X)$,
	it suffices to show that for $f \in C_b(\mathcal X)$
	\[
		X_1(f) = X_2(f) = X_3(f)\quad\text{almost surely}.
	\]
	As $X_1,X_2,X_3$ is a measure-valued martingale, we have that $Y_i := X_i(f)$ is a real-valued, bounded martingale and
	\[
		\mathbb E[ Y_1^2] = \mathbb E[ Y_2^2 ] = \mathbb E[ Y_3^2 ].
	\]
	Thus, we conclude that
	\[
		\mathbb E[ (Y_1 - Y_2)^2 ] = \mathbb E[Y_2^2 - Y_1^2] = 0.	\qedhere
	\]
\end{proof}

\subsection{Properties of \texorpdfstring{$\FFP_{p,n}^\textrm{Markov}$}{}}

First, we justify that the $n$-Markov property is preserved under equivalence.

\begin{lemma}
	Let $n \in \N \cup \{ \infty \}$, and $\fp X, \fp Y \in \FP$ with $\fp X \equiv \fp Y$.
	Then $\fp X$ is $n$-Markovian if and only if $\fp Y$ is $n$-Markovian.
\end{lemma}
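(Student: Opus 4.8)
The plan is to recast the $n$-Markov property of a filtered process as a property of the law of its level-one prediction process $\pp^1(\fp X)$; since that law is an invariant of the equivalence relation, the conclusion follows at once. The first step is to rephrase the defining identity. Fix $1 \le t \le N-1$ (the constraint at $t = N$ being vacuous on $\X = \prod_{s=1}^N \X_s$, or trivial under the usual convention). Since $\sigma(X_{1\vee(t-n):t}) \subseteq \F_t^{\fp X}$, the tower property shows that $\Law(X_{t+1}\mid\F_t^{\fp X}) = \Law(X_{t+1}\mid X_{1\vee(t-n):t})$ holds $\P^{\fp X}$-a.s.\ if and only if the $\Pc(\X_{t+1})$-valued random variable $\Law(X_{t+1}\mid\F_t^{\fp X})$ is $\P^{\fp X}$-a.s.\ of the form $h_t(X_{1\vee(t-n):t})$ for a Borel map $h_t \colon \X_{1\vee(t-n):t} \to \Pc(\X_{t+1})$. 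Moreover, as $\X_{t+1}$ and hence $\Pc(\X_{t+1})$ are Polish, a standard argument with regular conditional distributions shows that the existence of such an $h_t$ depends only on the joint law $\Law\big((X_{1\vee(t-n):t},\Law(X_{t+1}\mid\F_t^{\fp X}))\big)$ on $\X_{1\vee(t-n):t}\times\Pc(\X_{t+1})$: it holds precisely when, for a regular conditional distribution of the second coordinate given the first under this joint law, almost every conditional distribution is a Dirac mass.

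The second step realises this joint law as a measurable image of $\Law(\pp^1(\fp X))$. Because $X_{1:t}$ is $\F_t^{\fp X}$-measurable, the random measure $\pp^1(\fp X)_t = \Law(X\mid\F_t^{\fp X})$ is a.s.\ carried by $\{X_{1:t}\}\times\X_{t+1:N}$; hence its marginal on the first $t$ coordinates is a.s.\ the Dirac mass $\delta_{X_{1:t}}$, out of which $X_{1:t}$, and a fortiori $X_{1\vee(t-n):t}$, is recovered by a Borel ``atom'' map, while its marginal on the $(t+1)$-st coordinate equals $\Law(X_{t+1}\mid\F_t^{\fp X})$. Composing these operations gives a Borel map $\Phi_t \colon \Pc(\X)^N \to \X_{1\vee(t-n):t}\times\Pc(\X_{t+1})$ with $\Phi_t(\pp^1(\fp X)) = (X_{1\vee(t-n):t},\Law(X_{t+1}\mid\F_t^{\fp X}))$ a.s., so that $\Law\big((X_{1\vee(t-n):t},\Law(X_{t+1}\mid\F_t^{\fp X}))\big) = (\Phi_t)_\# \Law(\pp^1(\fp X))$, and likewise for $\fp Y$ with the same $\Phi_t$. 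Since equivalent filtered processes share the law of every prediction process, in particular $\Law(\pp^1(\fp X)) = \Law(\pp^1(\fp Y))$ (see \cite{HoKe84,BaBePa21}), these joint laws agree for every $t$; together with the first step this yields that $\fp X$ is $n$-Markovian if and only if $\fp Y$ is.

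I expect the only delicate point to be the measure-theoretic fact used in the first step — that ``$Z = h(W)$ $\P$-a.s.\ for some Borel $h$'' depends on $(Z,W)$ only through $\Law(Z,W)$ — which, though standard, relies on the Polishness of $\Pc(\X_{t+1})$, the existence of regular conditional probabilities, and a Borel selection of the atom of a Dirac mass. The remaining ingredients (the tower-property reformulation, the support property of $\Law(X\mid\F_t^{\fp X})$, and the Borel measurability of $\Phi_t$) are routine, and the invariance of $\Law(\pp^1)$ under $\equiv$ may simply be quoted.
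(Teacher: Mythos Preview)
Your proposal is correct and follows essentially the same approach as the paper: both argue that the $n$-Markov property can be read off from $\Law(\pp^1(\fp X))$, which is an invariant of the equivalence relation. The paper's proof is a two-line sketch of exactly this; you have simply written out in detail why the $n$-Markov condition at each $t$ is a function of $\Law(\pp^1(\fp X))$ (via the measurable map $\Phi_t$ and the ``$Z=h(W)$ a.s.'' criterion), which is the content the paper leaves implicit.
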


\begin{proof}
	By Definition \ref{def:Markov_processes} the property of being $n$-Markovian can be deduced from observing the law of the corresponding first-order prediction process.
	Hence, we conclude by the fact that $\fp X \equiv \fp Y$ readily implies
	\[
		\Law\big( \pp^1(\fp X) \big) = \Law\big( \pp^1(\fp Y) \big).
		\qedhere
	\]
\end{proof}

\begin{lemma}
	\label{lem:CW plain}
	If $\fp X \in \FFP_p^{\text{plain}}$, $\fp Y \in \FFP_p$ and $\Law(X) = \Law(Y)$, then $\CW_p(\fp Y, \fp X) = 0$.
	In particular, if additionally $\fp Y \in \FFP_p^{\text{plain}}$, then $\fp X = \fp Y$.
\end{lemma}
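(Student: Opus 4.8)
The plan is to prove the first assertion by producing an explicit causal coupling of $\fp Y$ and $\fp X$ with zero transport cost, and then to deduce the ``in particular'' part from the first assertion by symmetry together with the fact that $\SCW_p$ is a metric.

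First I would reduce to convenient representatives. Since, as recalled above, $\CW_p(\fp Y, \fp X)$ does not depend on the choice of representatives of $[\fp Y]$ and $[\fp X]$, we may assume that $(\Omega^{\fp X}, \F^{\fp X}, \P^{\fp X})$ and $(\Omega^{\fp Y}, \F^{\fp Y}, \P^{\fp Y})$ are standard Borel probability spaces (cf.\ \cite{BaBePa21,BaBePa19}). Writing $\mu := \Law(X) = \Law(Y) \in \Pc_p(\X)$, I disintegrate $\P^{\fp X} = \int_\X \P^{\fp X}_x\,\mu(dx)$ and $\P^{\fp Y} = \int_\X \P^{\fp Y}_x\,\mu(dx)$ along $X$ and $Y$, so that $\P^{\fp X}_x(X = x) = \P^{\fp Y}_x(Y = x) = 1$ for $\mu$-a.e.\ $x$, and then glue these disintegrations along the common $\X$-factor:
\[
	\pi := \int_\X \P^{\fp Y}_x \otimes \P^{\fp X}_x \, \mu(dx) \in \Pc\big(\Omega^{\fp Y}\times\Omega^{\fp X}\big).
\]
By construction $\pi$ has first marginal $\P^{\fp Y}$, second marginal $\P^{\fp X}$, and $X = Y$ holds $\pi$-a.s., so $\E_\pi[d_\X^p(Y,X)] = 0$; it then remains only to check that $\pi$ is causal, i.e.\ $\F^{\fp Y}_N \perp_{\F^{\fp Y}_t} \F^{\fp X}_t$ under $\pi$ for $t = 1,\dots,N-1$ (identifying the marginal $\sigma$-algebras with sub-$\sigma$-algebras of the product and working modulo $\pi$-null sets throughout).

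The causality check is the heart of the argument, and I expect it to be the main obstacle, since it is where plainness of $\fp X$ enters and where several conditional independences must be combined. Three ingredients are used: (i) by the way $\pi$ is built, $\F^{\fp X}$ and $\F^{\fp Y}$ are $\pi$-conditionally independent given $\sigma(X)$, hence in particular $\F^{\fp X}_t \perp_{\sigma(X_{1:t}) \vee \sigma(X_{t+1:N})} \F^{\fp Y}_N$; (ii) plainness of $\fp X$, which (unwinding Definition~\ref{def:Markov_processes} for $n=\infty$ via a routine tower-property induction) is equivalent to $\Law(X_{t+1:N}\mid\F^{\fp X}_t) = \Law(X_{t+1:N}\mid X_{1:t})$ for every $t$, i.e.\ $\F^{\fp X}_t\perp_{\sigma(X_{1:t})}\sigma(X_{t+1:N})$ under $\P^{\fp X}$, and hence under $\pi$ since this concerns only sub-$\sigma$-algebras of $\F^{\fp X}$ and $\pi$ has second marginal $\P^{\fp X}$; and (iii) the fact that $\sigma(X_{t+1:N}) = \sigma(Y_{t+1:N}) \subseteq \F^{\fp Y}_N$ and $\sigma(X_{1:t}) = \sigma(Y_{1:t}) \subseteq \F^{\fp Y}_t$ $\pi$-a.s. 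Feeding (i) and (ii) into the \emph{contraction} axiom for conditional independence yields $\F^{\fp X}_t \perp_{\sigma(X_{1:t})} \F^{\fp Y}_N$ (using $\sigma(X_{1:t})\vee\sigma(X_{t+1:N}) = \sigma(X)$), and then the elementary rule ``$\mathcal A \perp_{\mathcal B}\mathcal D$ and $\mathcal B \subseteq \mathcal C \subseteq\mathcal D$ imply $\mathcal A\perp_{\mathcal C}\mathcal D$'', applied with $\mathcal B = \sigma(X_{1:t})$, $\mathcal C = \F^{\fp Y}_t$, $\mathcal D = \F^{\fp Y}_N$, upgrades this to the required $\F^{\fp Y}_N \perp_{\F^{\fp Y}_t}\F^{\fp X}_t$. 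Thus $\pi \in \cpla(\fp Y,\fp X)$ and $\CW_p(\fp Y,\fp X) = 0$. I would also verify that the completions of the filtrations cause no trouble here, but this is routine, as enlarging a $\sigma$-algebra by null sets does not affect conditional independence.

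Finally, for the ``in particular'' part: if $\fp Y \in \FFP_p^{\text{plain}}$ as well, then applying the first part with the roles of $\fp X$ and $\fp Y$ interchanged (legitimate since $\Law(X) = \Law(Y)$) also gives $\CW_p(\fp X,\fp Y) = 0$, so that $\SCW_p(\fp X,\fp Y) = \max\{\CW_p(\fp X,\fp Y),\CW_p(\fp Y,\fp X)\} = 0$; since $\SCW_p$ is a metric on $\FFP_p$, this forces $\fp X = \fp Y$. (Equivalently, one may observe that the glued coupling $\pi$ is symmetric and, by the same computation now using plainness of $\fp Y$ for the reverse causality, bicausal, whence already $\AW_p(\fp X,\fp Y)=0$.)
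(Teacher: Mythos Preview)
Your argument is correct. The paper, however, takes a considerably shorter route by exploiting the freedom to choose a representative of the plain process $\fp X$: it simply takes the canonical one $(\X,(\sigma(X_{1:t}))_t,\sigma(X),\Law(Y),X)$ on path space, so that the coupling $\pi=(\mathrm{id}_{\Omega^{\fp Y}},Y)_\#\P^{\fp Y}$ is available without any disintegration, and causality from $\fp Y$ to $\fp X$ is immediate because $\sigma(X_{1:t})=\sigma(Y_{1:t})\subseteq\F^{\fp Y}_t$ already holds. For the ``in particular'' part the paper shows directly that this same $\pi$ is bicausal once $\fp Y$ is plain (this is precisely your parenthetical alternative), whence $\AW_p(\fp X,\fp Y)=0$. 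Your approach instead keeps arbitrary standard Borel representatives and verifies causality through explicit conditional-independence manipulations (contraction plus enlargement of the conditioning $\sigma$-algebra); this is heavier, but it makes transparent exactly where plainness of $\fp X$ is consumed, and it sidesteps the slightly implicit step in the paper of replacing an arbitrary plain $\fp X$ by its canonical representative before that identification has itself been justified. One small caveat: your primary argument for the second assertion invokes that $\SCW_p$ is a metric, which in the paper's internal logic is only established later (Lemma~\ref{lem:SCW = 0 iff AW = 0}); your alternative bicausality argument is the safe, self-contained choice here.
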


\begin{proof}
	By definition of a filtered process $Y$ is adapted, therefore the coupling $\pi$, given by $(\textrm{id}_{\Omega^\fp Y}, Y)_\# \P^\fp Y$, is causal from $\fp Y$ to $\fp X := (\X, \sigma(X_{1:t})_t, \sigma(X), \Law(Y), X)$, where $X$ denotes the canonical process on $\X$.

	If $\fp Y$ is plain, c.f.\ Definition \ref{def:Markov_processes}, then
	\[
		\Law(Y | \F_t^\fp Y) = \Law(Y | Y_{1:t})
		\quad \P^\fp Y\text{-almost surely.}	
	\]
	Again, as $X$ is adapted, this translates to the following conditional independence
	\[
		X \perp_{X_{1:t}} \F_{t,0}^{\fp Y,\fp X}\quad \text{ under }\pi,
	\]
	which means that $\pi$ is bicausal and $\AW_p(\fp X, \fp Y) = 0$.
\end{proof}

\begin{corollary}
	\label{cor:Hausdorff_Markov}
	For $n, m \in \N \cup \{ \infty \}$ with $n \le m$ we have 
	$\FFP_{p,n}^\textrm{Markov} \subseteq \FFP_{p,m}^\textrm{Markov}$.
	Moreover, processes in $\FFP_{p,n}^\textrm{Markov}$ are uniquely defined by their law, that is, for $\fp X, \fp Y \in \FFP_{p,n}^\textrm{Markov}$
	\[
		\Law(X) = \Law(Y) \implies \fp X = \fp Y.
	\]
\end{corollary}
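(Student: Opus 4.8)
The plan is to prove the two assertions in turn, the second being an immediate consequence of the first specialised to $m = \infty$.

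\emph{The inclusion.} For $n \le m$ and $t \in \{1,\dots,N-1\}$ one has $1 \vee (t-n) \ge 1 \vee (t-m)$ (with the convention $t - \infty = -\infty$, so that $\sigma(X_{1\vee(t-m):t}) = \sigma(X_{1:t})$ when $m = \infty$), whence the nested chain
\[
	\sigma(X_{1\vee(t-n):t}) \subseteq \sigma(X_{1\vee(t-m):t}) \subseteq \F_t^\fp X .
\]
If $\fp X \in \FFP_{p,n}^\textrm{Markov}$, then $\Law(X_{t+1}\mid\F_t^\fp X) = \Law(X_{t+1}\mid X_{1\vee(t-n):t})$ is measurable with respect to the middle $\sigma$-algebra, so conditioning it down to $\sigma(X_{1\vee(t-m):t})$ changes nothing; the tower property along $\sigma(X_{1\vee(t-m):t}) \subseteq \F_t^\fp X$ then yields, for each bounded continuous $f$ on $\X_{t+1}$,
\[
	\E[f(X_{t+1})\mid X_{1\vee(t-m):t}] = \E[f(X_{t+1})\mid \F_t^\fp X] \quad \text{a.s.}
\]
I would then run $f$ through a countable point-separating family in $C_b(\X_{t+1})$ — exactly as in the proof of Lemma \ref{lem:martingale lemma} — to upgrade this to the identity $\Law(X_{t+1}\mid X_{1\vee(t-m):t}) = \Law(X_{t+1}\mid\F_t^\fp X)$ almost surely, which is precisely the $m$-Markov property; hence $\fp X \in \FFP_{p,m}^\textrm{Markov}$.

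\emph{Uniqueness from the law.} Taking $m = \infty$ above gives $\FFP_{p,n}^\textrm{Markov} \subseteq \FFP_p^\textrm{plain}$ for every $n \in \N \cup \{\infty\}$. Consequently, if $\fp X, \fp Y \in \FFP_{p,n}^\textrm{Markov}$ satisfy $\Law(X) = \Law(Y)$, then both processes are plain, and the ``in particular'' part of Lemma \ref{lem:CW plain} forces $\fp X = \fp Y$. I do not anticipate a genuine obstruction here: the argument is a routine conditioning/tower-property computation, and the only points needing mild care are the bookkeeping for the case $m = \infty$ and the passage from the $f$-by-$f$ identity of conditional expectations to a single almost-sure identity of $\Pc(\X_{t+1})$-valued random variables.
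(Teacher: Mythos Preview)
Your proof is correct and follows essentially the same route as the paper: the paper dismisses the inclusion as ``a direct consequence of the definition'' (which you have simply unpacked via the tower property along the nested $\sigma$-algebras), and then deduces the uniqueness claim from Lemma~\ref{lem:CW plain} exactly as you do by first passing to $m=\infty$.
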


\begin{proof}
	The first claim is a direct consequence of the definition of $n$-th resp.\ $m$-th order Markov processes.
	The second claim then readily follows from Lemma \ref{lem:CW plain}.
\end{proof}

\begin{lemma}
	\label{lem:n Markov properties}
	$(\FFP_{p,n}^\text{Markov}, \tau_\text{Markov}^n)$ is a sequential Hausdorff space.
\end{lemma}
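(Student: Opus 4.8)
The plan is to realise $\tau^n_{\textrm{Markov}}$ as a pseudometrizable topology — which immediately yields that $(\FFP_{p,n}^{\textrm{Markov}}, \tau^n_{\textrm{Markov}})$ is first countable, hence sequential — and then to upgrade the pseudometric to a genuine metric by showing that the maps defining $\tau^n_{\textrm{Markov}}$ separate points, which is exactly the Hausdorff property.

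For the first step, recall from Definition \ref{def:Markov_processes} that $\tau^n_{\textrm{Markov}}$ is the initial topology with respect to the finitely many maps $\phi_t \colon \fp X \mapsto \Law(T^n_t(X))$, $1 \le t \le N-1$, with values in the spaces $\Pc_p(\X_{1\vee(t-n+1):t} \times \Pc_p(\X_{t+1}))$; each of these is a $p$-Wasserstein space over a Polish space, hence metrizable (indeed Polish). Since there are only finitely many such maps, $\tau^n_{\textrm{Markov}}$ is the pullback along $\fp X \mapsto (\phi_1(\fp X), \ldots, \phi_{N-1}(\fp X))$ of the product topology on $\prod_{t=1}^{N-1} \Pc_p(\X_{1\vee(t-n+1):t} \times \Pc_p(\X_{t+1}))$, which is metrizable; hence $\tau^n_{\textrm{Markov}}$ is induced by a pseudometric and is in particular first countable and sequential. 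This settles the first assertion with essentially no work.

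The substance is the Hausdorff property. A topology induced by a pseudometric is Hausdorff precisely when that pseudometric is a metric, i.e.\ when the family $(\phi_t)_{t=1}^{N-1}$ separates points of $\FFP_{p,n}^{\textrm{Markov}}$. So suppose $\fp X, \fp Y \in \FFP_{p,n}^{\textrm{Markov}}$ satisfy $\phi_t(\fp X) = \phi_t(\fp Y)$ for every $1 \le t \le N-1$; I want to conclude $\fp X = \fp Y$, and by Corollary \ref{cor:Hausdorff_Markov} it is enough to show $\Law(X) = \Law(Y)$. The key observation is that for an $n$-th order Markov process the law $\Law(X)$ is reconstructible from the data $(\phi_t(\fp X))_{t=1}^{N-1}$: writing $\kappa_t := \Law(X_{t+1} \mid \F_t^{\fp X})$, the $n$-Markov property makes $\kappa_t$, up to a nullset, a Borel function of the block $X_{1\vee(t-n+1):t}$, so that $\phi_t(\fp X) = \Law(X_{1\vee(t-n+1):t}, \kappa_t)$ pins down both the marginal $\Law(X_{1\vee(t-n+1):t})$ and the function $\kappa_t$ up to a $\Law(X_{1\vee(t-n+1):t})$-nullset. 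One then reconstructs $\Law(X_{1:t})$ by induction on $t$: the base case $\Law(X_1)$ is read off $\phi_1(\fp X)$ directly, and the step from $\Law(X_{1:t})$ to $\Law(X_{1:t+1})$ rests on the identity
\[
	\Law(X_{1:t+1}) = \int \delta_{x_{1:t}} \otimes \kappa_t\big(x_{1\vee(t-n+1):t}\big) \, \Law(X_{1:t})(dx_{1:t}),
\]
whose right-hand side involves only $\Law(X_{1:t})$ (already known) and $\kappa_t$ on the support of $\Law(X_{1\vee(t-n+1):t})$, a marginal of $\Law(X_{1:t})$ which agrees with the one recorded in $\phi_t(\fp X)$. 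Hence $\Law(X) = \Law(X_{1:N})$ depends only on $(\phi_t(\fp X))_t$, and likewise for $\fp Y$; as these agree, $\Law(X) = \Law(Y)$, so $\fp X = \fp Y$.

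I expect the delicate point to be precisely this reconstruction: one must argue carefully that a $\Pc_p(\X_{t+1})$-valued random variable together with its joint law with the conditioning block determines the transition kernel almost everywhere, and that the successive blocks can be concatenated consistently — which is exactly where the $n$-Markov structure enters, guaranteeing that the conditional law of $X_{t+1}$ given all of $X_{1:t}$ equals $\kappa_t$ evaluated at the last $\le n$ coordinates. By contrast, the pseudometrization, the chain ``pseudometrizable $\Rightarrow$ first countable $\Rightarrow$ sequential'', and the final appeal to Corollary \ref{cor:Hausdorff_Markov} are routine.
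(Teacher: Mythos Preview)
Your proposal is correct and follows essentially the same approach as the paper: both argue sequentiality from the fact that the finitely many maps $\phi_t$ take values in Polish (Wasserstein) spaces, and both establish Hausdorffness by reconstructing $\Law(X)$ inductively from the data $(\Law(T^n_t(X)))_{t}$ via the $n$-Markov transition kernels and then invoking the identification of $n$-Markov processes by their law (the paper cites Lemma~\ref{lem:CW plain} directly, you cite its consequence Corollary~\ref{cor:Hausdorff_Markov}). The only cosmetic differences are that you phrase the first part through pseudometrizability and start the induction at $t=1$, whereas the paper anchors it at $t=n$; neither affects the argument.
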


\begin{proof}
	First, we remark that, for $1 \le t \le N - 1$, the map $\fp X \mapsto \Law(T_t^n(X))$ takes values in the Polish (and therefore first countable) space
	\[
		\mathcal P_p(
			\X_{1 \vee (t - n + 1) : t} 
			\times
			\mathcal P_p(\X_{t + 1})
		),
	\]
	hence, $\tau_{n,\text{Markov}}$ is sequential.

	Next, let $\fp X, \fp Y \in \FFP_{p,n}^\text{Markov}$ with
	\[
		\Law(T^n_t(X)) = \Law(T^n_t(Y))
		\quad
		1 \le t \le N-1.
	\]
	As a direct consequence, we find
	\[
		\Law(X_{1 \vee (t - n + 1) : t + 1}) = \Law( Y_{1 \vee (t - n + 1): t + 1})\quad 1 \le t \le N-1,
	\]
	and the existence of a measurable map $f_t \colon \X_{1 \vee (t - n + 1):t} \to \mathcal P(\X_{t + 1})$ such that almost surely
	\begin{align*}
		f_t(X_{1 \vee (t - n + 1):t}) &= \Law(X_{t + 1} | X_{1 \vee (t - n + 1):t}),
		\\
		f_t(Y_{1 \vee (t - n + 1):t}) &= \Law(Y_{t + 1} | Y_{1 \vee (t - n + 1):t}).
	\end{align*}
	In particular, we have for $t = n$ that $\Law(X_{1:n + 1}) = \Law(Y_{1:n+1})$.
	We proceed to show $\Law(X) = \Law(Y)$.
	Assume that we have already shown
	\[
		\Law(X_{1:t}) = \Law(Y_{1:t})
	\]
	for some $n + 1 \le t \le N-1$.
	By the disintegration theorem and the definition of $n$-th order Markovian, we may write
	\begin{align*}
		\Law(X_{1 : t + 1}) 
		&=
		\Law(X_{1:t}) \otimes
		\Law(X_{t+1} | X_{1:t})
		\\
		&= 
		\Law(X_{1:t}) \otimes
		f_t(X_{t-n+1:t})
		\\
		&=
		\Law(Y_{1:t}) \otimes f_t(Y_{t - n + 1 :t})
		\\
		&= 
		\Law(Y_{1: t + 1}),
	\end{align*}
	where we use the notation $\mu \otimes k$ for $\mu \in \Pc(\X_{1:t})$ and a measurable kernel $k \colon \X_{1:t} \to \Pc(\X_{t + 1})$ to denote the gluing of $\mu$ with $k$, that is the probability defined by
	\[
		\mu \otimes k (A \times B) = \int_A k(x,B) \, \mu(dx)\quad A \in \mathcal B(X_{1:t}), B \in \mathcal B(X_{t+1}).
	\]
	This concludes the inductive step.

	Finally, we can apply Lemma \ref{lem:CW plain} and conclude $\fp X = \fp Y$.
\end{proof}

\begin{lemma}
	\label{lem:SCW = 0 iff AW = 0}
	Let $\fp X, \fp Y \in \FFP_p$ with $\CW_p(\fp X,\fp Y) = \CW_p(\fp Y,\fp X) = 0$, then $\fp X = \fp Y$.
	In particular, $\SCW_p(\fp X, \fp Y) = 0$ if and only if $\AW_p(\fp X, \fp Y) = 0$.
\end{lemma}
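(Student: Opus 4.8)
\emph{Reduction to the first assertion.} The ``in particular'' will follow at once: by \eqref{eq:def_symmetrized_CW}, $\SCW_p(\fp X,\fp Y)=0$ holds precisely when $\CW_p(\fp X,\fp Y)=\CW_p(\fp Y,\fp X)=0$, which by the first assertion is equivalent to $\fp X=\fp Y$ and hence to $\AW_p(\fp X,\fp Y)=0$; the reverse implication $\AW_p=0\Rightarrow\SCW_p=0$ is contained in \eqref{eq:distances are ordered}. So the whole content is the first assertion, which I would prove as follows.

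\emph{Step 1: a common probability space.} After replacing $\fp X,\fp Y$ by representatives whose terminal $\sigma$-algebras equal $\F^\fp X_N$ resp.\ $\F^\fp Y_N$, the infimum in \eqref{eq:def_CW} is attained: with marginals fixed in $\Pc_p(\X)$ the set of causal couplings is $\W_p$-compact and $\pi\mapsto\E_\pi[d_\X^p(X,Y)]$ is $\W_p$-continuous on it (a standard fact, cf.\ \cite{BaBeLiZa16,BaBePa21}). So there exist $\pi\in\cpla(\fp X,\fp Y)$ and $\pi'\in\cpla(\fp Y,\fp X)$ of zero cost; in particular $X=Y$ $\pi$-a.s.\ and $X=Y$ $\pi'$-a.s. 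I then glue $\pi$ and $\pi'$ along their common $\fp Y$-marginal, producing a probability $\rho$ on $\Omega^\fp X\times\Omega^\fp Y\times\Omega^\fp X$ whose $(1,2)$- and $(2,3)$-marginals are $\pi$ and $\pi'$, under which the first and third coordinates are conditionally independent given the second, and under which all three coordinate processes coincide $\rho$-a.s.; denote their common value by $\xi$. Let $\mathcal A_t,\mathcal B_t,\mathcal C_t$ be $\F^\fp X_t,\F^\fp Y_t,\F^\fp X_t$ lifted from the first, second and third coordinate. Then $(\xi,(\mathcal A_t)_t,\rho)$ and $(\xi,(\mathcal C_t)_t,\rho)$ represent $\fp X$ and $(\xi,(\mathcal B_t)_t,\rho)$ represents $\fp Y$; moreover causality of $\pi$ gives $\mathcal A_N\perp_{\mathcal A_t}\mathcal B_t$, causality of $\pi'$ gives $\mathcal B_N\perp_{\mathcal B_t}\mathcal C_t$, and the gluing gives $\mathcal A_N\perp_{\mathcal B_N}\mathcal C_N$; a contraction argument for conditional independence then yields $\mathcal A_t\perp_{\mathcal B_t}\mathcal C_t$ for every $t$.

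\emph{Step 2: climbing the prediction hierarchy.} Writing $\pp^r_\mathcal F$ for the rank-$r$ prediction process of $(\xi,(\mathcal F_t)_t,\rho)$, I would show by induction on $r\in\N\cup\{0\}$ that $\pp^r_\mathcal A=\pp^r_\mathcal B=\pp^r_\mathcal C$ $\rho$-a.s.; for $r=0$ all three equal $\xi$. Given the claim at rank $r-1$, let $\Pi$ be the common value; it is adapted to each of the three filtrations, so $\sigma(\Pi)\subseteq\mathcal A_N\cap\mathcal B_N\cap\mathcal C_N$. From $\sigma(\Pi)\subseteq\mathcal A_N$ and $\mathcal A_N\perp_{\mathcal A_t}\mathcal B_t$ one gets $\Law(\Pi|\mathcal A_t\vee\mathcal B_t)=\Law(\Pi|\mathcal A_t)$, hence $\pp^r_{\mathcal B,t}=\Law(\Pi|\mathcal B_t)=\E[\pp^r_{\mathcal A,t}|\mathcal B_t]$; similarly $\sigma(\Pi)\subseteq\mathcal B_N$ and $\mathcal B_N\perp_{\mathcal B_t}\mathcal C_t$ give $\pp^r_{\mathcal C,t}=\E[\pp^r_{\mathcal B,t}|\mathcal C_t]$, and $\mathcal A_t\perp_{\mathcal B_t}\mathcal C_t$ gives $\E[\pp^r_{\mathcal A,t}|\mathcal B_t\vee\mathcal C_t]=\pp^r_{\mathcal B,t}$. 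Hence, for each fixed $t$, the triple $(\pp^r_{\mathcal C,t},\pp^r_{\mathcal B,t},\pp^r_{\mathcal A,t})$ is a three-step measure-valued martingale (w.r.t.\ $\mathcal C_t\subseteq\mathcal C_t\vee\mathcal B_t\subseteq\mathcal C_t\vee\mathcal B_t\vee\mathcal A_t$) whose first and last entries have the same law, namely $\Law(\pp^r_t(\fp X))$, since the first and third coordinates both carry a representative of $\fp X$. Lemma \ref{lem:martingale lemma} then forces the three entries to agree $\rho$-a.s., and intersecting over the finitely many $t$ completes the induction. Since $\pp^r_\mathcal A=\pp^r_\mathcal B$ $\rho$-a.s.\ for every $r$, we conclude $\Law((\pp^r(\fp X))_r)=\Law((\pp^r(\fp Y))_r)$, that is $\fp X=\fp Y$.

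\emph{Main obstacle.} The delicate part is the inductive step for $r\ge 2$: the recursion defining $\pp^r$ entangles conditioning with the lower-rank prediction processes, so the identities $\pp^r_{\mathcal B,t}=\E[\pp^r_{\mathcal A,t}|\mathcal B_t]$ and its analogues are not automatic -- they require both the rank-$(r-1)$ a.s.-identification and the adaptedness of $\Pi$, and they must be combined with the three conditional-independence relations, of which $\mathcal A_t\perp_{\mathcal B_t}\mathcal C_t$ is the one needing care (this is where the choice of representatives with $\F=\F_N$ and the contraction rule for conditional independence enter). By comparison, the attainment of the $\CW_p$-infima, the gluing, and the verification that the listed triples are genuinely measure-valued martingales are routine.
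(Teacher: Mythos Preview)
Your proposal is correct and follows essentially the same route as the paper's proof: glue the two optimal causal couplings via the conditionally independent product, then climb the prediction-process hierarchy by induction, using Lemma~\ref{lem:martingale lemma} at each step to force the three prediction processes to coincide almost surely. The only cosmetic difference is in how the conditional-independence bookkeeping is packaged: the paper invokes Lemma~\ref{lem:CI.causal}\ref{it:CI.causal2} to obtain $\mathcal A_N\perp_{\mathcal A_t}(\mathcal B_t\vee\mathcal C_t)$ directly, whereas you instead derive $\mathcal A_t\perp_{\mathcal B_t}\mathcal C_t$ by contracting $\mathcal B_N\perp_{\mathcal B_t}\mathcal C_t$ with $\mathcal A_N\perp_{\mathcal B_N}\mathcal C_N$ and then combine it with $\mathcal A_N\perp_{\mathcal A_t}\mathcal B_t$; both organize the same tower-property computation.
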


\begin{proof}
	Let $\pi \in \cpla(\fp X, \fp Y)$ and $\pi' \in \cpla(\fp Y, \fp X)$ with $X = Y$ $\pi$- and $\pi'$-almost surely.
	By \cite{BaBePa21} we may assume w.l.o.g.\ that $\F_N^\fp X = \F^\fp X$, $\F_N^\fp Y = \F^\fp Y$, and $(\Omega^\fp X, \F^\fp X)$ and $(\Omega^\fp Y, \F^\fp Y)$ are standard Borel spaces.
	This allows us to consider the conditionally independent product of $\pi$ and $\pi'$ denoted by $\hat \pi := \pi \dot \otimes \pi' \in \cpl(\fp X, \fp Y, \fp X)$, see Definition \ref{def:CIprod}.
	Here, $\cpl(\fp X, \fp Y, \fp X)$ denotes the set of coupling with marginals $\P^\fp X$, $\P^\fp Y$ and $\P^\fp X$.
	We write $\tilde{\fp X}$ and $\tilde X$ for the second $\fp X$-coordinate in order to distinguish them.
	By induction we show that $\hat \pi$-almost surely
	\begin{equation}
		\label{eq:k th order pp coincides}
		\pp^k(\fp X) = \pp^k(\fp Y) = \pp^k(\tilde{\fp X}).
	\end{equation}
	for all $k \in \N \cup \{ 0 \}$.
	Since we know that $X = Y = \tilde X$ $\hat \pi$-almost surely,
	we have verified \eqref{eq:k th order pp coincides} for $k = 0$.
	Assume that \eqref{eq:k th order pp coincides} holds for some $k$.
	By causality of $\pi'$ and Lemma \ref{lem:CI.causal} we find, for $1 \le t \le N$,
	\begin{align}
		\label{eq:conditional independencies of product}
		\F_{0,N,0}^{\fp X,\fp Y, \tilde{\fp X}}
		\perp_{\F_{0,t,0}^{\fp X,\fp Y,\tilde{\fp X}}}		
		\F_{0,0,t}^{\fp X,\fp Y, \tilde{\fp X}},
		\quad
		\F_{N,0,0}^{\fp X,\fp Y, \tilde{\fp X}}
		\perp_{\F_{t,0,0}^{\fp X,\fp Y,\tilde{\fp X}}}		
		\F_{0,t,t}^{\fp X,\fp Y, \tilde{\fp X}},
	\end{align}
	where naturally extend the notation introduced in Definition \ref{def:causality} in order to write products of multiple $\sigma$-algebras.
	Since $\pp^k(\fp X)$ is $\F_{N,0,0}^{\fp X,\fp Y, \tilde{\fp X}}$-measurable and $\pp^k(\fp Y)$ is $\F_{N,N,0}^{\fp X,\fp Y, \tilde{\fp X}}$-measurable, we obtain by combining \eqref{eq:k th order pp coincides}, \eqref{eq:conditional independencies of product}, and the tower property
	\begin{align*}
		\pp^{k+1}_t(\tilde{\fp X}) &= 
		\Law \big( \pp^k(\tilde{\fp X})| \F_{0,0,t}^{\fp X, \fp Y, \tilde{\fp X}}\big)
		\\
		&=
		\E \Big[ \Law \big( \pp^k(\fp Y)| \F_{0,t,t}^{\fp X, \fp Y, \tilde{\fp X}}\big) | \F_{0,0,t}^{\fp X, \fp Y, \tilde{\fp X}}\Big]
		\\
		&=
		\E \Big[ 
			\pp^{k + 1}_t(\fp Y)
			| \F_{0,0,t}^{\fp X, \fp Y, \tilde{\fp X}}
		\Big],
	\end{align*}
	and similarly,
	\begin{align*}
		\pp_t^{k+1}(\fp Y) 
		&=
		\E \Big[ \pp_t^{k + 1}(\fp X) | \F_{0,t,t}^{\fp X,\fp Y,\tilde{\fp X}} \Big].
	\end{align*}
	Hence, the triplet $(\pp_t^{k + 1}(\tilde{\fp X}), \pp_t^{k + 1}(\fp Y)), \pp_t^{k + 1}(\fp X))$ satisfies the assumptions of Lemma \ref{lem:martingale lemma}, which concludes the inductive step.
	In particular, we have shown that $\pp(\fp X) \sim \pp(\fp Y)$, whence $\fp X = \fp Y$ by \cite[Theorem 4.11]{BaBePa21}.
\end{proof}

\begin{proposition}
	\label{prop:rel_comp_Markov}
	Let $n \in \N \cup \{ \infty \}$ and $M\subseteq \FFP_{p,n}^\text{Markov}$ be $\tau_{\text{Markov}}^n$-relatively compact.
	Then $M$ is relatively compact in $(M,\tau_\AW)$.
\end{proposition}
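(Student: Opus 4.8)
The plan is to verify a sequential compactness criterion by combining Theorem~\ref{thm:precompact} with the $n$-Markov structure. I first note that $\tau_{\text{Markov}}^n$ is the initial topology induced by the finitely many maps $\fp X\mapsto\Law(T^n_t(X))$, $1\le t\le N-1$, which take values in Polish spaces, and that it is Hausdorff by Lemma~\ref{lem:n Markov properties}; hence it is metrizable, and so is $\tau_\AW$. It therefore suffices to show that every sequence $(\fp X^k)_k$ in $M$ has a subsequence that $\AW_p$-converges to some limit in $\FFP_{p,n}^\text{Markov}$.

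\textbf{Step 1: relative compactness of the laws.} I would first show that $K:=\{\Law(X)\colon\fp X\in M\}$ is relatively compact in $\Pc_p(\X)$. As $d_\X^p(\hat x,x)=\sum_{t=1}^N d_{\X_t}^p(\hat x_t,x_t)$, a subset of $\Pc_p(\X)$ is $\W_p$-relatively compact if and only if all of its coordinate-marginal families are, so it is enough to treat $\{\Law(X_t)\colon\fp X\in M\}$ for $1\le t\le N$. For $t\le N-1$: the map $\fp X\mapsto\Law(T^n_t(X))$ is continuous, hence $\{\Law(T^n_t(X))\colon\fp X\in M\}$ is relatively compact, and pushing forward under the $\W_p$-$1$-Lipschitz coordinate projection onto $\Pc_p(\X_t)$ gives the claim. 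For $t=N$: I would apply to $\{\Law(T^n_{N-1}(X))\colon\fp X\in M\}$ the barycenter map $B\colon\Pc_p(\Y\times\Pc_p(\Z))\to\Pc_p(\Y\times\Z)$, $B(P)=\int\delta_y\otimes\mu\,P(dy,d\mu)$, which is $\W_p$-$1$-Lipschitz (couple any two arguments optimally on $\Y\times\Pc_p(\Z)$ and glue with optimal couplings of the $\Pc_p(\Z)$-coordinates); since $X_{1\vee(N-n):N-1}$ is $\F_{N-1}^{\fp X}$-measurable, $B(\Law(T^n_{N-1}(X)))=\Law(X_{1\vee(N-n):N-1},X_N)$, whose marginal on $\X_N$ is $\Law(X_N)$. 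Thus $K$ is relatively compact in $\Pc_p(\X)$.

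\textbf{Step 2: extraction and passage to the limit.} Given $(\fp X^k)_k$ in $M$, metrizability of $\tau_{\text{Markov}}^n$ and its relative compactness let me pass to a subsequence with $\fp X^k\to\fp W$ in $\tau_{\text{Markov}}^n$, $\fp W\in\FFP_{p,n}^\text{Markov}$; then Step~1 together with Theorem~\ref{thm:precompact} lets me extract a further subsequence with $\AW_p(\fp X^k,\fp Z)\to 0$, $\fp Z\in\FFP_p$. Invoking $\tau_\AW=\tau_{\text{HK}}$ from \cite{BaBePa21}, this $\AW_p$-convergence in particular gives $\tau^1_{\text{HK}}$-convergence, i.e. $\Law\big(X^k,(\Law(X^k|\F_s^{\fp X^k}))_{s=1}^N\big)\to\Law\big(Z,(\Law(Z|\F_s^{\fp Z}))_{s=1}^N\big)$ in $\Pc_p(\X\times\Pc_p(\X)^N)$. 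For each fixed $t\in\{1,\dots,N-1\}$, pushing this forward under the continuous, $\W_p$-contractive map $\X\times\Pc_p(\X)^N\ni(x,(\mu_s)_s)\mapsto\big(x_{1\vee(t-n+1):t},(\mathrm{pr}_{\X_{t+1}})_\#\mu_t\big)$ yields $\Law(T^n_t(X^k))\to\Law\big(Z_{1\vee(t-n+1):t},\Law(Z_{t+1}|\F_t^{\fp Z})\big)$. Comparing with $\Law(T^n_t(X^k))\to\Law(T^n_t(W))$ and using uniqueness of limits, $\Law\big(Z_{1\vee(t-n+1):t},\Law(Z_{t+1}|\F_t^{\fp Z})\big)=\Law(T^n_t(W))$ for all such $t$.

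\textbf{Step 3: conclusion, and the main difficulty.} Since $\fp W$ is $n$-Markovian, in $\Law(T^n_t(W))$ the $\Pc_p(\X_{t+1})$-coordinate is almost surely a fixed measurable function of the path coordinate; being concentrated on a graph is preserved under equality of laws, so $\Law(Z_{t+1}|\F_t^{\fp Z})=g_t(Z_{1\vee(t-n+1):t})$ a.s.\ for some measurable $g_t$. Conditioning on $\sigma(Z_{1\vee(t-n+1):t})\subseteq\F_t^{\fp Z}$ gives $\Law(Z_{t+1}|Z_{1\vee(t-n+1):t})=\Law(Z_{t+1}|\F_t^{\fp Z})$ for every $t$, so $\fp Z\in\FFP_{p,n}^\text{Markov}$; then $\Law(T^n_t(Z))=\Law(T^n_t(W))$ for all $t$, and since $\tau_{\text{Markov}}^n$ is Hausdorff, $\fp Z=\fp W$. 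Hence $(\fp X^k)$ has a subsequence $\AW_p$-converging to $\fp W\in\FFP_{p,n}^\text{Markov}$, which completes the argument. I expect the main difficulty to be Step~1 — above all the compactness of the $\X_N$-marginals, which necessitates the barycenter map $B$ and its $\W_p$-Lipschitz bound — together with Step~2's transfer of the prediction-process description of $n$-Markovianity to $\tau_\AW$-limits, which relies on $\tau_\AW=\tau_{\text{HK}}$ from \cite{BaBePa21} and on stability of ``concentration on a graph'' under equality of laws.
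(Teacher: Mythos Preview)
Your argument is correct and follows the same overall strategy as the paper: obtain $\W_p$-precompactness of the laws, extract an $\AW_p$-convergent subsequence via Theorem~\ref{thm:precompact}, and show that the $\AW_p$-limit is again $n$-Markovian. The two proofs differ only in tactics. For Step~1, the paper proceeds by induction on $t$ and shows outright \emph{convergence} $\Law(X^k)\to\Law(X)$, using Eder's continuity theorem for the conditionally independent product to pass from $\Law(X^k_{1:t})$ and $\Law(T^n_t(X^k))$ to $\Law(X^k_{1:t+1})$; your approach is more self-contained, treating each coordinate marginal separately via projections and the barycenter map $B$, and only establishes relative compactness (which suffices). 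For the identification step, the paper simply invokes ``$\tau_\AW$-continuity of $\fp X\mapsto\Law(T^n_t(X))$'' and then reads off the existence of the measurable kernels $f_t$; you unpack this by going through $\tau_{\textrm{HK}}^1$ and make the ``graph'' argument explicit. Finally, you go slightly further and identify the $\AW_p$-limit with the $\tau_{\textrm{Markov}}^n$-limit $\fp W$ via Hausdorffness; the paper stops once it has shown the limit lies in $\FFP_{p,n}^{\textrm{Markov}}$, which is all that relative compactness requires. Net effect: your route avoids the external citation to \cite{Ed19} at the cost of the barycenter lemma, while the paper's route gives the stronger intermediate conclusion $\Law(X^k)\to\Law(X)$.
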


\begin{proof}
	Let $(\fp X^k)_{k \in \N}$ be a sequence in $\FFP_{p,n}^\textrm{Markov}$ $\tau_{\textrm{Markov}}^n$-converging to $\fp X \in \FFP_{p,n}^\textrm{Markov}$.
	First, we convince ourselves that $(\Law(X^k))_{k \in \N}$ converges to $\Law(X)$:
	Assume that we have already shown that $\Law(X_{1:t}^k) \to \Law(X_{1:t})$ for some $1 \le t \le N-1$.
	The conditionally independent product $\dot\otimes$, see \cite[Definition 2.8]{Ed19}, allows us to rewrite
	\[
		\Law(X_{1:t}, \Law(X_{t + 1} | X_{1:t})) = \Law(X_{1:t}) \dot \otimes \Law(T_t^n(X)).
	\]
	By \cite[Theorem 4.1]{Ed19}, that is in our context continuity of $\dot\otimes$ at $(\Law(X_{1:t}), \Law(T_t^n(X)))$, we obtain that $\Law(X_{1:t+1}^k) \to \Law(X_{1:t+1})$.

	Hence, $(\Law(X^k))_{k \in \N}$ is convergent and therefore tight. 
	Thus, there exists by Theorem \refeq{thm:precompact} a subsequence of $(\fp X^k)_{k \in \N}$ converging in $\tau_{\AW}$ to some $\fp Y \in \FFP_p$.
	Due to $\tau_\AW$-continuity, we get
	\[
		\Law( T^n_t(X))	= \lim_{j \to \infty} \Law(T_t^n(X^{k_j})) = \Law( T^n_t(Y)).
	\]
	Hence, there exist measurable maps $f_t \colon \X_{1 \vee (t - n + 1):t} \to \Pc(\X_{t+1})$ with the property
	\[
		f_t(Y_{1 \vee (t - n + 1):t}) = \Law(Y_{t+1} | \F_t^\fp Y)
		\quad
		\text{almost surely}.
	\]
	In other words, $\fp Y \in \Lambda_{n,\text{Markov}}$.
	Therefore the sequence $(\fp X^k)_{k \in \N}$ is also relatively compact in $(\FFP_{p,n}^\textrm{Markov}, \tau_{\AW})$, which concludes the proof.
\end{proof}

\begin{proposition}
	\label{prop:rel_comp_CW}
	Let $M \subseteq \FFP_p^\textrm{plain}$ be relatively compact in $(\FFP_p^\textrm{plain}, \tau_\CW)$.
	Then $M$ is relatively compact in $(\FFP_p^\textrm{plain}, \tau_\AW)$.
\end{proposition}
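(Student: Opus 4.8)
The plan is to run the same scheme as in Proposition \ref{prop:rel_comp_Markov}, but now with $\tau_\CW$ in place of $\tau_\textrm{Markov}^n$ as the weaker topology. Let $(\fp X^k)_{k\in\N}$ be a sequence in $\FFP_p^\textrm{plain}$ that is $\tau_\CW$-convergent (equivalently, by the description of $\tau_\CW$, assume $\CW_p(\fp X,\fp X^k)\to 0$ for some $\fp X\in\FFP_p^\textrm{plain}$; relative compactness reduces to this by a subsequence argument since we only need relative compactness, and $\FFP_p^\textrm{plain}$ is stable under $\tau_\AW$-limits). Since $\W_p\le\CW_p$ by Remark \ref{rem:causal topologies are ordered}, we have $\W_p(\Law(X),\Law(X^k))\to 0$, so $(\Law(X^k))_k$ is tight. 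By Theorem \ref{thm:precompact} there is a subsequence $(\fp X^{k_j})_j$ converging in $\tau_\AW$ to some $\fp Y\in\FFP_p$. It then remains to show $\fp Y\in\FFP_p^\textrm{plain}$, because then $(\fp X^k)_k$ is relatively compact in $(\FFP_p^\textrm{plain},\tau_\AW)$.

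To identify the limit, I would argue $\CW_p(\fp X,\fp Y)=0$: since $\tau_\AW\supseteq\tau_\CW$ (Remark \ref{rem:causal topologies are ordered}), $\AW_p(\fp X^{k_j},\fp Y)\to 0$ implies $\CW_p(\fp X^{k_j},\fp Y)\to 0$, whence $\CW_p(\fp Y,\fp X^{k_j})\to 0$ as well by symmetry of $\tau_\CW$-convergence to a single limit — more carefully, from $\CW_p(\fp X,\fp X^{k_j})\to 0$ and $\CW_p(\fp X^{k_j},\fp Y)\to 0$ the triangle inequality for $\CW_p$ gives $\CW_p(\fp X,\fp Y)=0$. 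Now $\fp X\in\FFP_p^\textrm{plain}$, so by Lemma \ref{lem:CW plain} (applied with the roles $\fp X\leftrightarrow\fp Y$ there suitably matched: $\CW_p$ from an arbitrary filtered process to a plain one with equal law vanishes) together with the fact that $\CW_p(\fp X,\fp Y)=0$ forces $\Law(X)=\Law(Y)$, we get $\CW_p(\fp Y,\fp X)=0$ too. Hence $\CW_p(\fp X,\fp Y)=\CW_p(\fp Y,\fp X)=0$, and Lemma \ref{lem:SCW = 0 iff AW = 0} yields $\fp X=\fp Y$ in $\FFP_p$; in particular $\fp Y=\fp X\in\FFP_p^\textrm{plain}$, which is exactly what we needed.

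The main obstacle is the direction $\CW_p(\fp X,\fp Y)=0\Rightarrow\CW_p(\fp Y,\fp X)=0$ when $\fp X$ is plain — i.e.\ that causal optimality is automatically bicausal against a plain reference process. This is where the plainness of $\fp X$ is essential and where Lemma \ref{lem:CW plain} does the work: a causal coupling that puts all mass on the diagonal, composed with the canonical identification of $\fp X$ with $(\X,\sigma(X_{1:t})_t,\sigma(X),\Law(X),X)$, is automatically bicausal because the conditional law $\Law(X\mid\F_t^\fp X)=\Law(X\mid X_{1:t})$ on the plain side introduces no extra information. One should also double-check the reduction from "relatively compact" to "convergent with a named limit" at the start — as in Proposition \ref{prop:rel_comp_Markov} this is routine since $\tau_\CW$ is a first-countable (pseudometrizable) mode of convergence and Theorem \ref{thm:precompact} supplies $\tau_\AW$-subsequential limits — and verify that the limit $\fp Y$ obtained lies in $\FFP_p^\textrm{plain}$ rather than merely in $\FFP_p$, which is precisely the content of the identification $\fp Y=\fp X$ above.
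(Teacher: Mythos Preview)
Your proof is correct and follows essentially the same route as the paper's: extract a $\tau_\AW$-convergent subsequence via Theorem \ref{thm:precompact}, show $\CW_p(\fp X,\fp Y)=0$, and then identify $\fp Y=\fp X\in\FFP_p^{\textrm{plain}}$ via Lemma \ref{lem:CW plain} (and Lemma \ref{lem:SCW = 0 iff AW = 0}, which the paper leaves implicit). The only difference is in how you pass to the limit: the paper invokes the $1$-Lipschitz continuity of $\fp Y\mapsto\CW_p(\fp X,\fp Y)$ from Lemma \ref{lem:CW 1-Lipschitz}, while you invoke ``the triangle inequality for $\CW_p$'' directly --- but that triangle inequality is exactly what is established in the proof of Lemma \ref{lem:CW 1-Lipschitz} via the causal gluing of Corollary \ref{cor:gluing}, so you should cite one of those rather than take it as given.
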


\begin{proof}
	Let $M$ be relatively compact in $(\FFP_p^\textrm{plain}, \tau_\CW)$.
	Since $\tau_\W \subseteq \tau_\CW$, there exists by Theorem \ref{thm:precompact} a $\tau_{\AW}$-convergent subsequence with limit $\fp Y$ for some $\fp Y \in \FFP_p$.
	Since $\CW_p$ is by Lemma \ref{lem:CW 1-Lipschitz} (1-Lipschitz) continuous w.r.t.\ $\AW_p$, we find
	\[
		\CW_p(\fp X, \fp Y) = \lim_j \CW_p(\fp X, \fp X^{k_j}) = 0,
	\]
	and conclude with Lemma \ref{lem:CW plain} that $\fp Y (= \fp X) \in \FFP_p^\textrm{plain}$.
\end{proof}

\subsection{Causal gluing}
\label{ssec:causal.gluing}

This section is devoted to develop auxiliary results concerning the composition of causal couplings with matching intermediary marginal.
We recall that due to \cite{BaBePa21} we can always assume w.l.o.g.\ that all spaces under consideration are standard Borel.
Therefore, we assume for the rest of the section that we have chosen representatives of $\fp X, \fp Y, \fp Z \in \FFP$ such that
\[
	\big( \Omega^\fp X, \F_N^\fp X \big), \quad
	\big( \Omega^\fp Y, \F_N^\fp Y \big), \quad
	\big( \Omega^\fp Z, \F_N^\fp Z \big),
\]
are standard Borel.
\begin{definition} \label{def:CIprod}
	Let $\gamma \in \cpl(\fp X, \fp Y)$ and $\eta \in \cpl(\fp Y, \fp Z)$.We define the \emph{conditionally independent product} of $\gamma$ and $\eta$ as the probability on $\big(\Omega^\fp X \times \Omega^\fp Y \times \Omega^\fp Z, \F^{\fp X,\fp Y, \fp Z}_{N,N,N} \big)$ satisfying for any $U$, bounded and $\F^{\fp X,\fp Y, \fp Z}_{N,N,N}$-measurable, that
	\begin{equation}
		\label{eq:def_CIprod}
		\int U \, d\gamma \dot \otimes \eta =
		\int \int U(\omega^\fp X, \omega^\fp Y, \omega) \, \eta_{\omega^\fp Y} (d \omega^\fp Z) \, \gamma(d\omega^\fp X, d\omega^\fp Y),
	\end{equation}
	where $\eta_{\omega^\fp Y}$ is a disintegration kernel of $\eta$ w.r.t.\ the projection on $\Omega^\fp Y$.
	Due to symmetry reasons, we have
	\begin{equation}
		\label{eq:cond.indep.prod.both.disintegrations}
		\int U \, d \gamma \dot \otimes \eta
		=  \iint U(\omega^\mathbf{X},\omega^\mathbf{Y},\omega^\mathbf{Z}) \,(\eta_{\omega^\mathbf{Y}} \otimes  \gamma_{\omega^\mathbf{Y}})(d\omega^\mathbf{X},d\omega^\mathbf{Z})\,\P^\fp Y(d\omega^\mathbf{Y}).
	\end{equation}
\end{definition}

The term \eqref{eq:cond.indep.prod.both.disintegrations} clarifies the naming of $\gamma \dot \otimes \eta $ as the conditional independent product: conditionally on $\omega^\mathbf{Y}$ the knowledge of $\omega^\mathbf{X}$ does not affect $\omega^\mathbf{Z}$ and vice versa.
This suggests the following probabilistic formulation.

\begin{lemma} \label{lem:CI.product}
	Let $\gamma \in \cpl(\fp X,\fp Y)$ and $\eta \in \cpl(\fp Y, \fp Z)$.
	We have under $\gamma \dot \otimes \eta$
	\[
		\F^{\fp X, \fp Y, \fp Z}_{N,N,0} \perp_{\F^{\fp X,\fp Y,\fp Z}_{0,N,0}} \F_{0,N,N}^{\fp X, \fp Y, \fp Z}.
	\]
	In particular, if $\mathcal G$ is a $\sigma$-algebra with $\F^{\fp X,\fp Y,\fp Z}_{0,N,0}\subseteq\mathcal G \subseteq \F^{\fp X,\fp Y, \fp Z}_{N,N,0}$, we have under $\gamma \dot\otimes \eta$
	\[
		\F^{\fp X, \fp Y, \fp Z}_{N,N,0} \perp_\mathcal G \F_{0,N,N}^{\fp X, \fp Y, \fp Z}.
	\]
\end{lemma}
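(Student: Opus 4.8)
The plan is to unwind the definition of the conditionally independent product and verify the conditional independence directly via a factorization of conditional expectations. First I would recall that to show $\F^{\fp X, \fp Y, \fp Z}_{N,N,0} \perp_{\F^{\fp X,\fp Y,\fp Z}_{0,N,0}} \F_{0,N,N}^{\fp X, \fp Y, \fp Z}$ under $\gamma \dot\otimes \eta$, it suffices to check that for bounded measurable $U$ on $(\Omega^\fp X \times \Omega^\fp Y, \F^\fp X_N \otimes \F^\fp Y_N)$ and bounded measurable $W$ on $(\Omega^\fp Y \times \Omega^\fp Z, \F^\fp Y_N \otimes \F^\fp Z_N)$, one has
\[
	\E_{\gamma \dot\otimes \eta}\big[ U W \,\big|\, \F^{\fp X,\fp Y,\fp Z}_{0,N,0} \big]
	= \E_{\gamma \dot\otimes \eta}\big[ U \,\big|\, \F^{\fp X,\fp Y,\fp Z}_{0,N,0} \big]\,\E_{\gamma \dot\otimes \eta}\big[ W \,\big|\, \F^{\fp X,\fp Y,\fp Z}_{0,N,0} \big].
\]
Here $\F^{\fp X,\fp Y,\fp Z}_{0,N,0}$ is generated by the $\Omega^\fp Y$-coordinate (with the full $\sigma$-algebra $\F^\fp Y_N$), so these conditional expectations are really functions of $\omega^\fp Y$.

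The key computation uses the representation \eqref{eq:cond.indep.prod.both.disintegrations}: conditionally on $\omega^\fp Y$, the law of $(\omega^\fp X, \omega^\fp Z)$ is the product $\gamma_{\omega^\fp Y} \otimes \eta_{\omega^\fp Y}$ of the two disintegration kernels. Therefore
\[
	\E_{\gamma \dot\otimes \eta}\big[ U W \,\big|\, \F^{\fp X,\fp Y,\fp Z}_{0,N,0}\big](\omega^\fp Y)
	= \iint U(\omega^\fp X, \omega^\fp Y) W(\omega^\fp Y, \omega^\fp Z)\, \gamma_{\omega^\fp Y}(d\omega^\fp X)\, \eta_{\omega^\fp Y}(d\omega^\fp Z),
\]
which factors as $\big(\int U(\cdot,\omega^\fp Y)\,\gamma_{\omega^\fp Y}(d\omega^\fp X)\big)\big(\int W(\omega^\fp Y,\cdot)\,\eta_{\omega^\fp Y}(d\omega^\fp Z)\big)$, exactly the product of the two individual conditional expectations. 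A monotone-class / $\pi$-$\lambda$ argument extends this from product-form integrands $UW$ to all bounded $\F^{\fp X,\fp Y,\fp Z}_{N,N,N}$-measurable functions of $(\omega^\fp X, \omega^\fp Z)$ jointly, giving the first conditional independence statement. The use of standard Borel spaces, which we have assumed throughout this section, guarantees existence of the regular disintegration kernels $\gamma_{\omega^\fp Y}$ and $\eta_{\omega^\fp Y}$ that make \eqref{eq:cond.indep.prod.both.disintegrations} meaningful.

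For the "in particular" part, suppose $\mathcal G$ satisfies $\F^{\fp X,\fp Y,\fp Z}_{0,N,0}\subseteq\mathcal G \subseteq \F^{\fp X,\fp Y, \fp Z}_{N,N,0}$. Then $\F^{\fp X, \fp Y, \fp Z}_{N,N,0} \perp_{\mathcal G} \F_{0,N,N}^{\fp X, \fp Y, \fp Z}$ follows from the first part by the standard fact that conditional independence $\mathcal A \perp_{\mathcal B} \mathcal C$ together with $\mathcal B \subseteq \mathcal G \subseteq \sigma(\mathcal B \cup \mathcal A)$ implies $\mathcal A \perp_{\mathcal G} \mathcal C$ — indeed, $\sigma(\mathcal B, \mathcal A) \perp_{\mathcal B} \mathcal C$ (which upgrades $\mathcal A \perp_{\mathcal B} \mathcal C$ by adjoining $\mathcal B$ to the left side) and then conditioning on the intermediate $\mathcal G$ preserves the independence of $\mathcal C$ from anything $\sigma(\mathcal B,\mathcal A)$-measurable. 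I expect the main (though modest) obstacle to be bookkeeping: making sure the disintegration kernels are chosen consistently so that \eqref{eq:cond.indep.prod.both.disintegrations} holds as stated, and carefully invoking the measure-theoretic lemma on conditional independence under enlargement of the conditioning $\sigma$-algebra in the range $\F^{\fp X,\fp Y,\fp Z}_{0,N,0}\subseteq\mathcal G \subseteq \F^{\fp X,\fp Y, \fp Z}_{N,N,0}$; everything else is a direct unpacking of definitions.
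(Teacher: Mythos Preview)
Your proposal is correct and follows essentially the same approach as the paper: both verify the factorization of conditional expectations given $\F^{\fp X,\fp Y,\fp Z}_{0,N,0}$ by direct computation with the disintegration kernels, and both deduce the ``in particular'' part from the standard enlargement lemma for conditional independence (the paper cites \cite[Proposition 5.8]{Ka97} rather than spelling it out). The only cosmetic difference is that the paper tests against functions $U,V,W$ of the single coordinates $\omega^\fp X,\omega^\fp Y,\omega^\fp Z$ and uses one disintegration plus the tower property, whereas you allow $U$ and $W$ to depend on $\omega^\fp Y$ as well and invoke the symmetric representation \eqref{eq:cond.indep.prod.both.disintegrations} directly.
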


\begin{proof}
	Let $U$, $V$, $W$ be bounded and $\F^{\fp X, \fp Y, \fp Z}_{N,0,0}$-measurable, $\F^{\fp X, \fp Y, \fp Z}_{0,N,0}$-measurable, and $\F^{\fp X, \fp Y, \fp Z}_{0,0,N}$-measurable, respectively.
	Write $\hat W$ for the bounded, $\F^{\fp X, \fp Y, \fp Z}_{0,N,0}$-measurable random variable given by $\int W(\omega^\fp Z) \, \eta_{\omega^\fp Y}(d \omega^\fp Z)$.
	By Definition \ref{def:CIprod} and the tower property we get
	\[
		\E_{\gamma \dot \otimes \eta} \left[ U V W \right] = \E_{\gamma \dot \otimes \eta} \left[ U V \hat W \right]
		= \E_{\gamma \dot \otimes \eta} \left[\E_{\gamma \dot \otimes \eta}\left[U | F_{0,N,0}^{\fp X, \fp Y, \fp Z} \right]  V  \hat W \right].
	\]
	Since $\hat W$ coincides with $\E_{\gamma \dot \otimes \eta}[W | \F^{\fp X, \fp Y, \fp Z}_{0,N,0}]$ and $V$ was arbitrary, we derive
	\[
		\E_{\gamma \dot \otimes \eta}\left[UW | \F^{\fp X, \fp Y, \fp Z}_{0,N,0} \right]	= \E_{\gamma \dot \otimes \eta}\left[ U | \F^{\fp X, \fp Y, \fp Z}_{0,N,0} \right] \E_{\gamma \dot \otimes \eta}\left[ W | \F^{\fp X, \fp Y, \fp Z}_{0,N,0} \right],
	\]
	which shows the first statement.

	The second statement is a consequence of applying \cite[Proposition 5.8]{Ka97} to the previously shown.
	\end{proof}

\begin{lemma} \label{lem:CI.causal}
	Let $\gamma \in \cpl(\fp X,\fp Y)$ and $\eta \in \cpla(\fp Y, \fp Z)$.
	We have, for $1 \le t \le N$,
	\begin{enumerate}[label=(\arabic*)]
		\item \label{it:CI.causal1} under $\gamma \dot \otimes \eta\colon$ $\F^{\fp X, \fp Y, \fp Z}_{N,N,0} \perp_{F^{\fp X, \fp Y, \fp Z}_{t,t,0}} \F_{0,t,t}^{\fp X, \fp Y, \fp Z}$;
	\end{enumerate}
	if furthermore $\gamma \in \cpla(\fp X, \fp Y)$, then we have
	\begin{enumerate}[resume,label=(\arabic*)]
		\item \label{it:CI.causal2} under $\gamma \dot \otimes \eta\colon \F^{\fp X, \fp Y, \fp Z}_{N,0,0}\perp_{F^{\fp X, \fp Y, \fp Z}_{t,0,0}}\F_{0,t,t}^{\fp X, \fp Y, \fp Z}$.
	\end{enumerate}
\end{lemma}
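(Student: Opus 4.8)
The plan is to deduce both statements from the causality of $\eta$ (and, for \ref{it:CI.causal2}, of $\gamma$), from Lemma~\ref{lem:CI.product}, and from the standard calculus of conditional independence --- symmetry of $\perp$, monotonicity in the right entry, the chain rule
\[
	\mathcal F \perp_{\mathcal G} (\mathcal H \vee \mathcal K) \iff \big( \mathcal F \perp_{\mathcal G} \mathcal H \text{ and } \mathcal F \perp_{\mathcal G \vee \mathcal H} \mathcal K \big),
\]
and \cite[Proposition~5.8]{Ka97}, which lets us replace a conditioning $\sigma$-algebra $\mathcal G$ by any $\mathcal G'$ with $\mathcal G \subseteq \mathcal G' \subseteq \mathcal F \vee \mathcal G$. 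Throughout I suppress the superscript $\fp X, \fp Y, \fp Z$, writing $\F_{r,s,u}$ for $\F^{\fp X,\fp Y,\fp Z}_{r,s,u}$. As a preliminary I would note that causality lifts from the marginals to $\gamma \dot \otimes \eta$: by \eqref{eq:def_CIprod} the $(\fp X,\fp Y)$-marginal of $\gamma \dot \otimes \eta$ is $\gamma$ and by \eqref{eq:cond.indep.prod.both.disintegrations} its $(\fp Y,\fp Z)$-marginal is $\eta$, and since conditional independence of $\sigma$-algebras that are pulled back along the projection onto $\Omega^\fp Y \times \Omega^\fp Z$ (resp.\ $\Omega^\fp X \times \Omega^\fp Y$) only depends on that marginal, the causality of $\eta$ gives, under $\gamma \dot \otimes \eta$ and for $1 \le t \le N$, $\F_{0,N,0} \perp_{\F_{0,t,0}} \F_{0,0,t}$, and, when in addition $\gamma$ is causal, also $\F_{N,0,0} \perp_{\F_{t,0,0}} \F_{0,t,0}$.

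For \ref{it:CI.causal1} I would first establish $\F_{N,N,0} \perp_{\F_{0,t,0}} \F_{0,0,t}$. Restricting the right entry of Lemma~\ref{lem:CI.product} (note $\F_{0,0,t} \subseteq \F_{0,N,N}$) gives $\F_{N,N,0} \perp_{\F_{0,N,0}} \F_{0,0,t}$; since $\F_{0,N,0} = \F_{0,t,0} \vee \F_{0,N,0}$, the chain rule applied, by symmetry, with $\mathcal F = \F_{0,0,t}$, $\mathcal G = \F_{0,t,0}$, $\mathcal H = \F_{0,N,0}$ and $\mathcal K = \F_{N,N,0}$ --- so that $\mathcal H \vee \mathcal K = \F_{N,N,0}$ --- has as its two hypotheses precisely the lifted causality of $\eta$ and the restricted Lemma~\ref{lem:CI.product}, whence $\F_{N,N,0} \perp_{\F_{0,t,0}} \F_{0,0,t}$. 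As $\F_{0,t,0} \subseteq \F_{t,t,0} \subseteq \F_{N,N,0}$, \cite[Proposition~5.8]{Ka97} upgrades this to $\F_{N,N,0} \perp_{\F_{t,t,0}} \F_{0,0,t}$, and enlarging the right entry by $\F_{0,t,0} \subseteq \F_{t,t,0}$, together with $\F_{0,0,t} \vee \F_{0,t,0} = \F_{0,t,t}$, yields \ref{it:CI.causal1}.

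For \ref{it:CI.causal2} I would restrict the left entry of \ref{it:CI.causal1} to $\F_{N,0,0}$, giving $\F_{N,0,0} \perp_{\F_{t,t,0}} \F_{0,t,t}$, i.e.\ $\F_{N,0,0} \perp_{\F_{t,0,0} \vee \F_{0,t,0}} \F_{0,t,t}$ since $\F_{t,t,0} = \F_{t,0,0} \vee \F_{0,t,0}$, and then combine this with the lifted causality of $\gamma$, namely $\F_{N,0,0} \perp_{\F_{t,0,0}} \F_{0,t,0}$, through the chain rule with $\mathcal F = \F_{N,0,0}$, $\mathcal G = \F_{t,0,0}$, $\mathcal H = \F_{0,t,0}$ and $\mathcal K = \F_{0,t,t}$. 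Because $\F_{0,t,0} \subseteq \F_{0,t,t}$, the resulting relation $\F_{N,0,0} \perp_{\F_{t,0,0}} (\F_{0,t,0} \vee \F_{0,t,t})$ is exactly \ref{it:CI.causal2}.

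I expect the main obstacle to be purely organisational: verifying the inclusions among the various threefold product $\sigma$-algebras, checking the (routine) fact that conditional independence of pulled-back $\sigma$-algebras is governed by the relevant marginal, and --- above all --- selecting for each statement the correct instances of the chain rule and of \cite[Proposition~5.8]{Ka97} while tracking which $\sigma$-algebra plays which role. There is no genuinely hard analytic content once this bookkeeping is arranged.
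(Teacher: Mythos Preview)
Your proof is correct and follows essentially the same route as the paper's: both combine Lemma~\ref{lem:CI.product} with the causality of $\eta$ to obtain \ref{it:CI.causal1}, and then use \ref{it:CI.causal1} together with the causality of $\gamma$ to obtain \ref{it:CI.causal2}. The only difference is presentational --- the paper writes the argument as an explicit chain of conditional-expectation identities via the tower property, whereas you invoke the same steps through the named rules (chain rule and \cite[Proposition~5.8]{Ka97}) of the conditional-independence calculus.
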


\begin{proof}
	To show item \ref{it:CI.causal1}, let $W$ be bounded and $\F^{\fp X,\fp Y,\fp Z}_{0,t,t}$ measurable.
	We obtain from Lemma \ref{lem:CI.product} the first equality in
	\begin{equation}
		\label{eq:CI.causal1}
		\E_{\gamma \dot \otimes \eta} \left[ W | \F^{\fp X, \fp Y, \fp Z}_{N,N,0} \right] = \E_{\gamma \dot \otimes \eta} \left[ W | \F^{\fp X, \fp Y, \fp Z}_{0,N,0}  \right] = \E_{\gamma \dot \otimes \eta} \left[ W | \F^{\fp X, \fp Y, \fp Z}_{0,t,0}  \right],
	\end{equation}
	whereas the second stems from causality of $\eta$.
	Here this causality yields under $\gamma \dot \otimes \eta$ that, conditionally on $\F^{\fp X, \fp Y, \fp Z}_{0,t,0}$, $\F^{\fp X,\fp Y,\fp Z}_{0,N,0}$ is independent of $\F^{\fp X, \fp Y, \fp Z}_{0,t,t}$.
	Since the last term in \eqref{eq:CI.causal1} is $\F^{\fp X, \fp Y, \fp Z}_{t,t,0}$-measurable, the tower property yields item \ref{it:CI.causal1}.

	To establish item \ref{it:CI.causal2}, let $W$ be as above.
	Note that causality of $\gamma$ provides under $\gamma \dot \otimes \eta$ that, conditionally on $\F^{\fp X,\fp Y,\fp Z}_{t,0,0}$, $\F^{\fp X, \fp Y, \fp Z}_{N,0,0}$ is independent of $\F^{\fp X, \fp Y, \fp Z}_{t,t,0}$.
	Using that in addition to item \ref{it:CI.causal1} and the tower property, we conclude
	\begin{align*}
		\E_{\gamma \dot \otimes \eta}\left[ W | \F^{\fp X, \fp Y, \fp Z}_{N,0,0}\right] &= \E_{\gamma \dot \otimes \eta}\left[ \E_{\gamma \dot \otimes \eta}\left[ W | \F^{\fp X, \fp Y, \fp Z}_{t,t,0}\right]  | \F^{\fp X, \fp Y, \fp Z}_{N,0,0}\right] 
	\\ &= \E_{\gamma \dot \otimes \eta}\left[ \E_{\gamma \dot \otimes \eta}\left[ W | \F^{\fp X, \fp Y, \fp Z}_{t,t,0}\right]  | \F^{\fp X, \fp Y, \fp Z}_{t,0,0}\right] = \E_{\gamma \dot \otimes \eta}\left[ W | \F^{\fp X, \fp Y, \fp Z}_{t,0,0}\right].\qedhere
	\end{align*}
\end{proof}

\begin{corollary}
	\label{cor:gluing}
	Let $\gamma \in \cpla(\fp X,\fp Y)$ and $\eta \in \cpla(\fp Y, \fp Z)$.
	We write $\proj_{\Omega^\fp X \times \Omega^\fp Z}$ for the projection onto $\Omega^\fp X \times \Omega^\fp Z$, then $(\proj_{\Omega^\fp X \times \Omega^\fp Z})_\# \gamma \dot \otimes \eta \in \cpla(\fp X, \fp Z)$.
\end{corollary}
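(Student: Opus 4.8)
\emph{Plan.}
The plan is to reduce the statement to item~\ref{it:CI.causal2} of Lemma~\ref{lem:CI.causal}, together with the elementary observation that conditional independence of $\sigma$-algebras is transported along a push-forward whenever those $\sigma$-algebras are measurable with respect to the surviving coordinates. Write $\mu := \gamma \dot\otimes \eta$ for the conditionally independent product, a probability on $\big(\Omega^\fp X \times \Omega^\fp Y \times \Omega^\fp Z, \F^{\fp X,\fp Y,\fp Z}_{N,N,N}\big)$, and set $\pi := (\proj_{\Omega^\fp X \times \Omega^\fp Z})_\# \mu$. The marginal condition is immediate: by Definition~\ref{def:CIprod} the projection of $\mu$ onto $\Omega^\fp X \times \Omega^\fp Y$ is $\gamma$, and by the symmetric formula \eqref{eq:cond.indep.prod.both.disintegrations} its projection onto $\Omega^\fp Y \times \Omega^\fp Z$ is $\eta$; hence $\pi$ has first marginal $\P^\fp X$ and second marginal $\P^\fp Z$, i.e.\ $\pi \in \cpl(\fp X,\fp Z)$.

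For causality, since both $\gamma \in \cpla(\fp X,\fp Y)$ and $\eta \in \cpla(\fp Y,\fp Z)$, item~\ref{it:CI.causal2} of Lemma~\ref{lem:CI.causal} applies and gives, for $1 \le t \le N$, that under $\mu$
\[
 \F^{\fp X,\fp Y,\fp Z}_{N,0,0} \perp_{\F^{\fp X,\fp Y,\fp Z}_{t,0,0}} \F^{\fp X,\fp Y,\fp Z}_{0,t,t}.
\]
Because $\F^{\fp X,\fp Y,\fp Z}_{0,0,t} \subseteq \F^{\fp X,\fp Y,\fp Z}_{0,t,t}$ and conditional independence with a fixed conditioning $\sigma$-algebra is inherited by sub-$\sigma$-algebras on the independent side, this yields
\[
 \F^{\fp X,\fp Y,\fp Z}_{N,0,0} \perp_{\F^{\fp X,\fp Y,\fp Z}_{t,0,0}} \F^{\fp X,\fp Y,\fp Z}_{0,0,t} \quad \text{under } \mu .
\]
Writing $T := \proj_{\Omega^\fp X \times \Omega^\fp Z}$, these three $\sigma$-algebras are precisely $T^{-1}\F^{\fp X,\fp Z}_{N,0}$, $T^{-1}\F^{\fp X,\fp Z}_{t,0}$ and $T^{-1}\F^{\fp X,\fp Z}_{0,t}$ (using $T^{-1}\F^{\fp X,\fp Z}_{s,t} = \F^{\fp X,\fp Y,\fp Z}_{s,0,t}$). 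Transporting the conditional independence along $T$ then gives, under $\pi$, that $\F^{\fp X,\fp Z}_{N,0} \perp_{\F^{\fp X,\fp Z}_{t,0}} \F^{\fp X,\fp Z}_{0,t}$ for $1 \le t \le N$, which by Definition~\ref{def:causality} is exactly the assertion $\pi \in \cpla(\fp X,\fp Z)$.

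The only step needing a word of justification --- and hence the main obstacle, modest as it is --- is the transfer of conditional independence along $T$: one checks that $\mathcal A' \perp_{\mathcal B'} \mathcal C'$ holds under $T_\#\mu$ if and only if $T^{-1}\mathcal A' \perp_{T^{-1}\mathcal B'} T^{-1}\mathcal C'$ holds under $\mu$, for any measurable $T$ and $\sigma$-algebras $\mathcal A',\mathcal B',\mathcal C'$ in the target. This is standard: a bounded $T^{-1}\mathcal C'$-measurable function is of the form $W \circ T$ with $W$ bounded and $\mathcal C'$-measurable, and $\E_\mu[\,W \circ T \mid T^{-1}\mathcal B'\,] = \E_{T_\#\mu}[\,W \mid \mathcal B'\,] \circ T$, so the defining factorization identity of conditional independence passes between $\mu$ and $T_\#\mu$ verbatim. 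Applying this with $T = \proj_{\Omega^\fp X\times\Omega^\fp Z}$ finishes the argument; alternatively, one can simply rerun the conditional-expectation computations in the proof of Lemma~\ref{lem:CI.causal} directly for test functions that do not depend on $\omega^\fp Y$.
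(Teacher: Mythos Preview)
Your proposal is correct and follows exactly the route the paper takes: the paper's proof is the one-liner ``This result is a direct consequence of item~\ref{it:CI.causal2} of Lemma~\ref{lem:CI.causal}.'' You have simply made explicit the routine steps the paper leaves to the reader --- the marginal check, shrinking $\F^{\fp X,\fp Y,\fp Z}_{0,t,t}$ to $\F^{\fp X,\fp Y,\fp Z}_{0,0,t}$, and transporting conditional independence along the projection --- all of which are correct.
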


\begin{proof}
	This result is a direct consequence of item \ref{it:CI.causal2} of Lemma \ref{lem:CI.causal}.
\end{proof}

\begin{lemma}
	\label{lem:CW 1-Lipschitz}
	Let $\fp X \in \FFP_p$.
	The map
	\begin{equation}
		\FFP_p \ni \fp Y \mapsto \CW_p(\fp X,\fp Y)
	\end{equation}
	is 1-Lipschitz w.r.t.\ $\SCW_p$.
\end{lemma}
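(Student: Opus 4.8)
The plan is to first establish that $\CW_p$ satisfies the oriented triangle inequality
\[
	\CW_p(\fp X, \fp Z) \le \CW_p(\fp X, \fp Y) + \CW_p(\fp Y, \fp Z), \qquad \fp X, \fp Y, \fp Z \in \FFP_p,
\]
and then read the Lipschitz estimate off from it. Indeed, applying this inequality once as stated and once with the roles of $\fp Y$ and $\fp Z$ exchanged yields $\CW_p(\fp X, \fp Z) - \CW_p(\fp X, \fp Y) \le \CW_p(\fp Y, \fp Z)$ and $\CW_p(\fp X, \fp Y) - \CW_p(\fp X, \fp Z) \le \CW_p(\fp Z, \fp Y)$. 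Since both right-hand sides are dominated by $\SCW_p(\fp Y, \fp Z) = \max\{\CW_p(\fp Y, \fp Z), \CW_p(\fp Z, \fp Y)\}$, we obtain $|\CW_p(\fp X, \fp Y) - \CW_p(\fp X, \fp Z)| \le \SCW_p(\fp Y, \fp Z)$, which is precisely the asserted $1$-Lipschitz property.

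For the triangle inequality I would argue by causal gluing. Fix $\epsilon > 0$ and work with the standard Borel representatives fixed at the beginning of Section~\ref{ssec:causal.gluing}. Choose $\gamma \in \cpla(\fp X, \fp Y)$ and $\eta \in \cpla(\fp Y, \fp Z)$ that are $\epsilon$-optimal for $\CW_p(\fp X, \fp Y)$ and $\CW_p(\fp Y, \fp Z)$, respectively. Since both have $\P^\fp Y$ as their $\Omega^\fp Y$-marginal, the conditionally independent product $\gamma \dot\otimes \eta$ of Definition~\ref{def:CIprod} is well-defined, and by Corollary~\ref{cor:gluing} the coupling $\pi := (\proj_{\Omega^\fp X \times \Omega^\fp Z})_\# (\gamma \dot\otimes \eta)$ is causal from $\fp X$ to $\fp Z$. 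Applying the pointwise triangle inequality for $d_\X$ followed by Minkowski's inequality in $L^p(\gamma \dot\otimes \eta)$, and using that $\gamma \dot\otimes \eta$ reproduces $\gamma$ on the $(\Omega^\fp X, \Omega^\fp Y)$-coordinates and $\eta$ on the $(\Omega^\fp Y, \Omega^\fp Z)$-coordinates, gives
\[
	\CW_p(\fp X, \fp Z) \le \E_{\gamma \dot\otimes \eta}\big[ d_\X^p(X,Z) \big]^{1/p} \le \E_\gamma\big[d_\X^p(X,Y)\big]^{1/p} + \E_\eta\big[d_\X^p(Y,Z)\big]^{1/p} \le \CW_p(\fp X, \fp Y) + \CW_p(\fp Y, \fp Z) + 2\epsilon.
\]
Letting $\epsilon \downarrow 0$ completes the argument.

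I expect no genuine obstacle here: the one point that needs care is to use a single common representative of $\fp Y$ for both $\gamma$ and $\eta$, so that $\gamma \dot\otimes \eta$ is meaningful, together with the representative-independence of $\CW_p$ recorded after Theorem~\ref{thm:precompact}; the remaining steps are exactly the causal-gluing machinery already set up in Lemmas~\ref{lem:CI.product}--\ref{lem:CI.causal} and Corollary~\ref{cor:gluing}. (If one prefers to avoid $\epsilon$-optimal couplings, one may instead invoke the compactness/lower-semicontinuity arguments behind $\CW_p$ to take genuine minimizers, but the $\epsilon$-argument is cleaner and suffices.)
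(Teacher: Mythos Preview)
Your proof is correct and follows essentially the same route as the paper: both arguments glue causal couplings via the conditionally independent product (Corollary~\ref{cor:gluing}), apply the triangle/Minkowski inequality under $\gamma \dot\otimes \eta$ to obtain the oriented triangle inequality for $\CW_p$, and then deduce the $1$-Lipschitz bound by applying it in both directions and dominating by $\SCW_p$. The only cosmetic difference is that you work with $\epsilon$-optimal couplings and pass to the limit, whereas the paper writes the inequality for arbitrary causal couplings and infimizes afterwards.
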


\begin{proof}
	Let $\pi \in \cpla(\fp X,\fp Y)$ and $\pi' \in \cpla(\fp Y, \fp Z)$, then $(\proj_{\Omega^\fp X \times \Omega^\fp Z} \pi \dot\otimes \pi' )_\# \in \cpla(\fp X, \fp Z)$ by Corollary \ref{cor:gluing}.
	Hence, we compute
	\begin{align*}
		\CW_p(\fp X, \fp Z) &\le 
		\Big( \E_{\pi \dot\otimes \pi'} \big[ d^p_\X(X,Z) \big] \Big)^\frac1p
		\\
		&\le
		\Big( \E_\pi\big[ d_\X^p(X,Y) \big] \Big)^\frac1p + \Big( \E_{\pi'} \big[ d_\X^p(Y,Z)\big]\Big)^\frac1p,
	\end{align*}
	and conclude
	\[
		\big| \CW_p(\fp X,\fp Z) - \CW_p(\fp X, \fp Y) \big| \le \SCW_p(\fp Y,\fp Z).\qedhere	
	\]
\end{proof}

\subsection{Postponed proofs of Section \ref{sec:introduction}}

\begin{proof}[Proof of Lemma \ref{lem:trace topology}]
	Due to \ref{it:the lemma finer topology} it remains to show that convergence in $(\mathcal A,\tau')$ implies convergence in $(\mathcal A, \tau)$.
	To this end, let $(y^k)_{k \in \N}$ be a sequence in $(\mathcal A,\tau')$ converging to $y$.
	By \ref{it:the lemma relative compactness} we find a subsequence $(y^{k_j})_{j \in \mathbb N}$ that converges in $(\mathcal A,\tau)$ to some element $z$.
	Again, by \ref{it:the lemma finer topology} we have that $(y^{k_j})_{j \in \mathbb N}$ also converges in $(\mathcal A,\tau')$ to $z$, which yields by \ref{it:the lemma Hausdorff} that $y = z$.
	Therefore, $y$ is the only $(\mathcal A,\tau)$-accumulation point of $(y^k)_{k \in \N}$, from where we conclude that $(y^k)_{k \in \N}$ has to converge to $y$ in $(\mathcal A,\tau)$.
\end{proof}

\begin{proof}[Proof of Theorem \ref{thm:topolgies_FFP}]
	It is evident from \cite[Theorem 3.10]{BaBePa21}, \cite[Lemma 4.7]{BaBePa21} and \cite[Lemma 4.10]{BaBePa21} that $\tau_\AW$ and $\tau_\textrm{HK}$ coincide.

	Using the notation of Lemma \ref{lem:trace topology}, we let $(\mathcal B, \tau) := (\FFP_p,\tau_\AW)$ and $\mathcal A = \mathcal B$.
	By Remark \ref{rem:causal topologies are ordered}, resp.\ \cite[Proposition 6]{BoLiOb21} we have for $\tau' \in \{ \tau_{\SCW}, \tau_{\textrm{BLO}}^{N-1} \}$ that $\tau' \subseteq \tau$.
	By Lemma \ref{lem:SCW = 0 iff AW = 0}, resp.\ \cite[Theorem 4]{BoLiOb21} we find that $\tau'$ is Hausdorff.
	Moreover, we obtain $\tau_\W \subseteq \tau'$ from Remark \ref{rem:causal topologies are ordered} resp.\ \cite[Proposition 8]{BoLiOb21}, where $\tau_\W$ is the topology of $p$-Wasserstein convergence of the laws.
	Since $\tau_\W$ and $\tau$ have the same relatively compact sets by Theorem \ref{thm:precompact}, we conclude the same for $\tau'$.
	Hence, all assumptions of Lemma \ref{lem:trace topology} are met which yields the first two assertions of the theorem.

	The last assertion of the theorem follows mutatis mutandis.
\end{proof}

\begin{proof}[Proof of Theorem \ref{thm:topologies_plain}]
	Let $\mathcal A := \FFP_{p,n}^\textrm{Markov}$, $\mathcal B := \FFP_p$, and $\tau = \tau_\AW$.

	It is evident (either by construction, from Theorem \ref{thm:topolgies_FFP}, or from \cite[Lemma 7.5]{BaBaBeEd19b}) that $\tau_\textrm{Markov}^n$ is coarser than $\tau_\textrm{H}$, $\tau_\textrm{A}$, $\tau_\textrm{HK}^r$, $\tau_\textrm{OS}$, $\tau_\textrm{AW}$ and $\tau_\textrm{SCW}$.
	Similarly, we have that all of these topologies are coarser than $\tau_\AW$.
	We remark that $\tau_\AW \supseteq \tau_\textrm{OS}$ can be seen due to the fact that the map which maps $\fp X \in \FFP_p$ to its Snell envelope is $\tau_\AW$-continuous.

	Thus, it suffices to show that $(\mathcal A, \tau') = (\mathcal A,\tau)$ for $\tau' \in \{ \tau_\textrm{Markov}^n, \tau_\CW \}$.
	We proceed by verifying the assumptions in Lemma \ref{lem:trace topology}:
	Item \ref{it:the lemma sequential} follows from Lemma \ref{lem:n Markov properties} resp.\ is evident by construction.
	Item \ref{it:the lemma finer topology} is satisfied, since it is easy to see that $\tau_\textrm{H} \subseteq \tau_\textrm{HK}^1 \subseteq \tau_\textrm{HK} = \tau_\textrm{AW}$ (where the last equality is due to Theorem \ref{thm:topolgies_FFP}) resp.\ by Remark \ref{rem:causal topologies are ordered}.
	Item \ref{it:the lemma relative compactness} is proven in Proposition \ref{prop:rel_comp_Markov} resp.\ Proposition \ref{prop:rel_comp_CW}.
	Finally, item \ref{it:the lemma Hausdorff} is due to Corollary \ref{cor:Hausdorff_Markov} resp.\ Lemma \ref{lem:CW plain}.
\end{proof}

\begin{proof}[Proof of Proposition \ref{prop:topologies_weak}]
	Let $\mathcal A = \mathcal B = \Pc_p(\R^d)$ and $\tau = \tau_\W$.
	It is straightforward to check that $\mathcal V_p$ is a pseudometric and $\mathcal V_p \le \W_p$.
	Moreover, as a simple consequence of Lemma \ref{lem:martingale lemma} we find that $\mathcal V_p$ separates points:
	If $\mathcal V_p(\P,\Q) = 0$ then there exist martingale couplings $\pi \in \cpl(\P,\Q)$ and $\tilde \pi \in \cpl(\Q,\P)$.
	Let $X = (X_t)_{t =  1}^3$ be a Markov process with $(X_1,X_2) \sim \pi$ and $(X_2,X_3) \sim \pi'$.
	Therefore, $X$ is a martingale and by Lemma \ref{lem:martingale lemma} $X_1 \sim X_2$, that is $\P = \Q$ and $\mathcal V_p$ is a metric on $\Pc_p(\R^d)$.
	We write $\tau_\mathcal V$ for the topology induced by $\mathcal V_p$ and get $\tau_\mathcal V \subseteq \tau_\W$.
	It remains to verify Item \ref{it:the lemma relative compactness} of Lemma \ref{lem:trace topology}.
	
	To this end, let $(\P^k)_{k \in \N}$ converge to $\P$ in $\tau_\mathcal V$  and we want to show $\W_p$-relative compactness of the sequence.
	By \cite[Lemma 6.1]{BaBePa18}, we have
	\begin{equation}
		\label{eq:V_p = W_p proj}		
		V_p(\P^k,\P) = 
		\inf_{\Q \le_{\textrm{cx}} \P}
		\W_p(\P^k, \Q),
	\end{equation}
	where $\le_{\textrm{cx}}$ denotes the convex order on $\P_1(\R^d)$.
	Recall that, for $\mu,\nu \in \P_1(\R^d)$, $\mu \le_\textrm{cx} \nu$ if and only if $\int f \, d\mu \le \int f \, d\nu$ for all $f \colon \R^d \to \R^d$ convex.
	Due to compactness of closed balls (of finite radius) in $\R^d$ it is easy to see, for example by proper application of the De la Vall\'ee-Poussin theorem for uniform integrability and \cite[Definition 6.8]{Vi09}, that the set $\{ \Q \le_\textrm{cx} \P \}$ is $\W_p$-compact in $\P_p(\R^d)$, hence, we find by standard arguments the existence of $\Q^k \le_\textrm{cx} \P$ attaining \eqref{eq:V_p = W_p proj}.
	Consequentially,
	\[
		\lim_{k \to \infty} V_p(\P,\P^k) = \lim_{k \to \infty} \W_p(\P^k,\Q^k) = 0,
	\]
	which in particular yields $\W_p$-relative compactness of $\{ \P^k \colon k \in \N\}$.
\end{proof}


\bibliographystyle{abbrv}
\bibliography{../MBjointbib/joint_biblio}
\end{document}